\newcommand{\newreptheorem}[2]{\newtheorem*{rep@#1}{\rep@title}\newenvironment{rep#1}[1]{\def\rep@title{#2 \ref{##1}}\begin{rep@#1}}{\end{rep@#1}}}
\newtheorem{thm}{Theorem}[section]
\newtheorem*{thm*}{Theorem}
\newtheorem{lem}[thm]{Lemma}
\newtheorem*{lem*}{Lemma}
\newtheorem{prop}[thm]{Proposition}
\newtheorem*{prop*}{Proposition}
\theoremstyle{definition}
\newcommand{\Ad}[1]{\text{Ad}_{#1}}
\newcommand{\HP}{\text{HP}}
\newcommand{\PSL}{\operatorname{PSL}}
\newcommand{\PGL}{\operatorname{PGL}}
\newcommand{\SO}{\operatorname{SO}}
\newcommand{\Sol}{\text{Sol}}
\newcommand{\AdS}{\text{AdS}}
\newcommand{\Hom}{\operatorname{Hom}}
\title{Hyperbolic structures from Sol on pseudo-Anosov mapping tori}
\author{Kenji Kozai}
\address{Department of Mathematics,
University of California, Berkeley,
970 Evans Hall \#3840,
Berkeley, CA 94720-3840}
\email{kozai@math.berkeley.edu}
\thanks{}
\begin{document}

\begin{abstract}
The invariant measured foliations of a pseudo-Anosov homeomorphism
induce a natural (singular) Sol structure on mapping tori of surfaces
with pseudo-Anosov monodromy. We show that when the pseudo-Anosov
$\phi:S\rightarrow S$ has orientable foliations and does not have 1 as an
eigenvalue of the induced
cohomology action on the closed surface, then the Sol structure can be
deformed to nearby cone hyperbolic structures, in the sense of projective
structures. The cone angles can be chosen to be decreasing from multiples
of $2\pi$.
\end{abstract}

\maketitle

\section{Introduction}

Let $S=S_{g,n}$ be a surface of genus $g$ with $n$ punctures such that
$2g+n>2$.
Given a homeomorphism $\phi:S \rightarrow S$, we can define the
mapping torus $M_\phi = S \times [0,1] / (x,1) \sim (\phi(x),0)$.
The hyperbolization theorem by Thurston \cite{thurston98} states that
$M_\phi$ is hyperbolic if and only if $\phi$ is pseudo-Anosov.
A pseudo-Anosov
homeomorphism $\phi:S \rightarrow S$ has two transverse (possibly singular)
foliations
$\mathcal{F}^s$ and $\mathcal{F}^u$ with transverse measures $\mu_s$ and
$\mu_u$, respectively, and a constant $\lambda>1$ such that $\phi$ preserves
$\mathcal{F}^s$ and $\mathcal{F}^u$ and scales the measures by
$\lambda^{-1}$ and $\lambda$. When $S$ is not closed, the map $\phi$
induces a pseudo-Anosov map on the closed surface $\bar{S}$ of genus $g$,
where the $n$ punctures have been filled in. We will also call this map
$\phi:\bar{S} \rightarrow \bar{S}$.

The measured foliations $(\mathcal{F}^s,\mu_s)$ and
$(\mathcal{F}^u,\mu_u)$ endow $S$ with a singular Euclidean metric. The
corresponding suspension flow $\phi_t$ on $M_\phi$, expanding the leaves of
$\mathcal{F}^u$ by a factor of $e^t$ and contracting the leaves of
$\mathcal{F}^s$ by $e^{-t}$, has period $\log \lambda$, so that
$\phi_{\log\lambda}=\phi$. One model for Sol
geometry is to take $\mathbb{R}^3$ with the metric $ds^2 = e^{2z}dx^2+
e^{-2z}dy^2+dz^2$, so the suspension flow can be viewed as an isometry
of Sol translating the surface $S$ in the $z$ direction. The
identification $(x,y,z+\log\lambda) \sim (\phi(x,y),z)$ then defines a
singular Sol structure on $M_\phi$, with singular locus $\Sigma$ given by the
orbits of the singular points and punctures of $\mathcal{F}^s$ and
$\mathcal{F}^u$.

In the case where $S$ is a punctured torus, Hodgson \cite{hodgson86}
studied how to deform representations of $\pi_1(M_\phi)$ near a
representation corresponding to a projection of the Sol structure. Sol
space contains embedded hyperbolic planes, and the representations
studied in \cite{hodgson86} correspond to projecting the 3-manifold onto
a hyperbolic plane inside Sol, resulting in a reducible representation that
gives $M_\phi$ the structure of a transversely
hyperbolic foliation (recall that a representation
$\rho:\pi_1(M_\phi) \rightarrow \PSL(2,\mathbb{C})$ is \textit{irreducible}
if the only subspaces of $\mathbb{C}^2$ that are invariant under $\rho$
are trivial). Further results about deforming reducible representations to
irreducible representations can be found in \cite{frohman91}, 
\cite{heusener98}, and \cite{abdelghani02}.
Heusener, Porti, and Su{\'a}rez \cite{heusener01-2} have also shown that
hyperbolic structures can be regenerated from Sol, constructing a path of
nearby hyperbolic structures that collapse onto a circle, and rescaling the
metric as it collapses to obtain the Sol metric on $M_\phi$.

In the case where $S$ is not the punctured torus, such a
regeneration theorem is not known. In this paper, we utilize half-pipe (HP)
geometry, studied by Danciger \cite{danciger13}, to regenerate hyperbolic structures
in a more general setting. In particular, we will prove the following result.

\begin{repthm}{thm:conestructures}
	Let $\phi: S \rightarrow S$ be a pseudo-Anosov homeomorphism whose stable
	and unstable foliations, $\mathcal{F}^s$ and $\mathcal{F}^u$, are orientable
	and $\phi^*:H^1(\bar{S}) \rightarrow H^1(\bar{S})$ does not have 1 as an
	eigenvalue. Then, there exists a family
	of singular hyperbolic structures on $M_\phi$, smooth on the complement of
	$\Sigma$ and with cone singularities along $\Sigma$, that degenerate to a
	transversely hyperbolic foliation. The
	degeneration can
	be rescaled so that the path of rescaled structures limit to the singular
	Sol structure on $M_\phi$, as projective structures. Moreover, the cone angles
	can be chosen to be decreasing.
\end{repthm}

The proof of Theorem \ref{thm:conestructures} uses HP structures as an
intermediate. We find a family of HP structures that collapse, such that
rescaling the collapse in an appropriate manner yields Sol. The HP structures
involved are built from a representation $\rho_0: \pi_1(M_\phi \setminus \Sigma)
\rightarrow \PSL(2,\mathbb{C})$ arising from projecting the 3-dimensional Sol
space to one of its embedded hyperbolic planes, along with a first order
deformation of the representation. The following 
is an application of the Ehresmann--Thurston principle:

\begin{thm*}[\cite{danciger13}, Proposition 3.6]
	Let $M_0$ be a compact $n$-manifold with boundary and let $M$ be
	a thickening of $M_0$ so that $M\setminus M_0$ is a collar neighborhood of
	$\partial M_0$. Suppose $M$ has an $\HP$ structure defined by the developing
	map $D_\HP$, and holonomy representation $\sigma_\HP$. Let $X$ be either
	$\mathbb{H}^n$ or $\text{AdS}^n$ and let $\rho_t:\pi_1(M_0) \rightarrow
	\text{Isom}(X)$
	be a family of representations compatible to first order at time $t=0$ with
	$\sigma_\HP$. Then we can construct a family of $X$ structures on $M_0$ with
	holonomy $\rho_t$ for short time.
\end{thm*}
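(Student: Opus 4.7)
The plan is to imitate the standard Ehresmann--Thurston argument, which realizes small deformations of holonomy by deformations of the developing map, but adapted to the transitional setting where the base structure lives in $\HP$ and the target representations in $\text{Isom}(X)$. First I would choose a finite open cover $\{U_i\}$ of $M$ whose elements lift to open sets in the universal cover on which $D_\HP$ is injective, together with a subordinate partition of unity. Each lift gives a chart $\phi_i$ with values in $\HP^n$, and on an overlap $U_i \cap U_j$ the transition is the restriction of $\sigma_\HP(\gamma_{ij})$ for a suitable $\gamma_{ij} \in \pi_1(M)$.

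The next step is to convert these $\HP$ charts into $X$ charts using the ambient projective model. Both $\HP^n$ and $X$ embed in $\mathbb{RP}^n$, and $\HP^n$ arises as a projective rescaling limit of $X$: there is a one-parameter family of projective transformations $C_t$ such that the conjugate $C_t^{-1} \text{Isom}(X) C_t$ degenerates to a subgroup of $\text{Isom}(\HP^n)$ as $t \to 0$. The hypothesis that $\rho_t$ is first-order compatible with $\sigma_\HP$ is precisely the statement that $C_t \rho_t(\gamma) C_t^{-1}$ converges to $\sigma_\HP(\gamma)$ with the correct first derivative at $t=0$ for every $\gamma \in \pi_1(M_0)$. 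I would then take candidate $X$ charts to be $\phi_i^t = C_t^{-1} \circ \phi_i$, viewed as maps into $X \subset \mathbb{RP}^n$ after a small perturbation dictated by the first-order part of $\rho_t$.

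The heart of the argument is showing that these charts glue. On overlaps the naive transition functions are $C_t^{-1} \sigma_\HP(\gamma_{ij}) C_t$, which agree with $\rho_t(\gamma_{ij})$ to first order but not exactly, so each $\phi_i^t$ must be adjusted by a correction supported in $U_i$ and weighted by the partition of unity, absorbing the discrepancy. Consistency across triple overlaps reduces to a cocycle condition, and first-order compatibility is precisely what makes this condition hold modulo higher-order error; verifying this carefully is the principal technical obstacle. Compactness of $M_0$ bounds the errors uniformly for small $t$, and a standard implicit function theorem argument (the usual end of the Ehresmann--Thurston proof) corrects the holonomy from approximately $\rho_t$ to exactly $\rho_t$, producing a family of $X$ structures on $M_0$ with holonomy $\rho_t$ for $t$ in some neighborhood of $0$. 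The role of the collar $M \setminus M_0$ is to provide room for this correction without disturbing the $\HP$ structure on $M$ itself.
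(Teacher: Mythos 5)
This statement is a cited external result (Danciger's Proposition~3.6); the paper does not prove it but relies on it as a black box. Still, comparing your sketch against Danciger's actual argument: his proof does not build $X$-charts directly. It treats the $\HP$ structure as an $(\mathbb{R}P^n,\PGL(n+1,\mathbb{R}))$-structure, applies the Ehresmann--Thurston holonomy theorem in that ambient projective category to the continuous family $\mathfrak{r}(t)\rho_t\mathfrak{r}(t)^{-1}\to\sigma_\HP$ to produce projective developing maps $F_t\to D_\HP$, and then uses a separate \emph{containment} lemma (the paper's Lemma~\ref{lem:developing_maps}) plus compactness of $M_0$ to conclude that $F_t$ of a compact fundamental domain lands in the rescaled copy $\mathbb{X}_t$ of $X$; equivariance then extends this to the whole developing map, and undoing the rescaling gives the $X$-structures.

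Your proposal re-derives the Ehresmann--Thurston chart-gluing machinery by hand rather than invoking it, which is fine in spirit, but it has a genuine gap precisely where Danciger's containment lemma does the work. Your candidate charts $\phi_i^t = C_t^{-1}\circ\phi_i$ have image in $C_t^{-1}(\HP^n)$, and since $C_t^{-1}=\mathfrak{r}(t)$ only scales the degenerate direction, $C_t^{-1}(\HP^n)=\HP^n$, which is strictly larger than $X$ (e.g.\ $\HP^3\supsetneq\mathbb{H}^3$ in $\mathbb{R}P^3$). So nothing in your construction forces the developing image to lie in $X$; the ``small perturbation dictated by the first-order part of $\rho_t$'' is asserted rather than shown to accomplish this, and a perturbation near the identity in $\PGL$ will not in general move a set out of $\HP^n\setminus X$ into $X$. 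The fix is the observation your sketch omits: because $M_0$ is compact, one only needs the developing image of a compact fundamental domain to lie in some $\mathbb{X}_r$, and since a compact subset of $\HP^n=\mathbb{X}_0$ is at positive distance from $\partial\HP^n$ and bounded in the degenerate coordinate, continuity of $F_t$ in $t$ forces $F_t(K)\subset\mathbb{X}_r$ for all small $|t|,|r|$. Also, the final ``implicit function theorem correcting the holonomy from approximately $\rho_t$ to exactly $\rho_t$'' misdescribes the mechanism: in the Ehresmann--Thurston argument the bump-function modification of the developing map is already $\rho_t$-equivariant, so there is no second holonomy-correction step; the collar is used to support the modification away from where the structure is prescribed, as you say.
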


As noted in \cite{danciger13}, given an $\HP$ structure, the regeneration of
a hyperbolic structure only requires that it exists on the level of representations.
In Theorem \ref{thm:conestructures}, the
conditions that the invariant foliations $\mathcal{F}^s$ and $\mathcal{F}^u$
are orientable and that $\phi^*$ does not have $1$ as an eigenvalue
guarantee smoothness of the representation variety at $\rho_0$, so we can
find a nearby family of representations $\rho_t$. We also
do a simple computation to generalize Danciger's notion of infinitesimal cone
angle to multiple components. This allows us to adapt the HP machinery to
show that there are singular hyperbolic structures near the HP structures,
which are themselves collapsing to the Sol structure. We will then show that
the singular locus can be controlled so that the family of $\mathbb{H}^3$
structures are cone manifolds.

\subsection{Outline}

In Section \ref{sec:background}, we present an overview of geometric
structures and infinitesimal deformations. Section \ref{sec:metabelian}
describes the collapsed structure as a metabelian representation and
establishes the notation used in the following section. Section
\ref{sec:smoothness} proves smoothness of the representation variety at the
metabelian representation, which is used in Section \ref{sec:singularstructures}
to show that we can find nearby three dimensional hyperbolic structures via
HP geometry. Section \ref{sec:singularlocus} analyzes the behavior of the
singular locus to show that the singularities can be realized as cone
singularities, providing the final step to Theorem \ref{thm:conestructures}.

\subsection{Acknowledgments}

The author would like to thank Steven Kerckhoff for advising much of this work
at Stanford University and Jeffrey Danciger for many useful conversations
about $\HP$ structures. The author would also like to thank the reviewer for
helpful comments and references.

\section{Background}\label{sec:background}

Let $X$ be a manifold and $G$ be a group of analytic diffeomorphisms of $X$.
We will study
geometric structures on a manifold $M$ through the framework of
$(X,G)$-structures described by Ehresmann \cite{ehresmann36} and Thurston
\cite{thurston80}.

\subsection{$(X,G)$ structures}

An \textit{$(X,G)$ structure} on a manifold $M$ is a collection of charts
$\{\psi_\alpha : U_\alpha \rightarrow X\}$, where the $\{U_\alpha\}$ are an
open cover of $M$ and the transition maps
$\psi_\alpha \psi_\beta^{-1}$ are restrictions of elements
$g_{\alpha \beta} \in G$.

In the context of this paper, we will take $X$ to be (a subset of)
$\mathbb{R}P^3$ and $G$ to be (a subgroup of) $\PGL(4,\mathbb{R})$, with
$\mathbb{H}^3$ and Sol being described as projective structures. An $(X,G)$
structure on $M$ defines a developing map $D:\tilde{M}\rightarrow X$ that is
equivariant under the holonomy representation $\rho:\pi_1(M) \rightarrow X$.

A smooth family of $(X,G)$-structures on a manifold $M$ can be described
by a family of developing maps $D_t: \tilde{M}\rightarrow X$ and corresponding
holonomy representations $\rho_t : \pi_1(M) \rightarrow G$. 
Two families of $(X,G)$-structures $D_t$ and $F_t$ such that $D_0 = F_0$
are equivalent if there exists a smooth family $g_t$ of elements in $G$ and
a smooth family of diffeomorphisms $\phi_t$ defined on all but a 
neighborhood of 
$\partial M$ such that $D_t = g_t \circ F_t \circ \tilde{\phi_t}$ where
$\tilde{\phi_t}$ is the lift of $\phi_t$,
$g_0 = 1$, and $\tilde{\phi_0}$ is the identity. 
Such a deformation $D_t$ is trivial if $D_0$ is equivalent to the family of
structures $F_t = D_0$. In this case, the holonomy representations also differ
by conjugation by a smooth family $g_t$, i.e. $\rho_t = g_t \rho_0 g_t^{-1}$.

We will study deformations of geometric structures through their
representations.
Let $R(\pi_1(M),G)=\Hom(\pi_1(M),G)$ be the variety of
representations of $\pi_1(M)$ into $G$, $\mathcal{X}(\pi_1(M),G) = R(\pi_1(M),G)//G$
be the character variety, where the quotient is the GIT quotient as $G$ acts
by conjugation, and let $\mathcal{D}(M,(X,G))$ be the space of
$(X,G)$-structures on $M$ up to the equivalence defined. The
Ehresmann--Thurston principle states that locally, deformations of
geometric structures can be studied by their holonomy representations
(see \cite{goldman88} for a proof of the theorem).

\begin{thm*}[Thurston]\label{thm:hol}
	The map $\text{hol} : \mathcal{D}(M,(X,G)) \rightarrow
	\mathcal{X}(\pi_1(M),G)$ taking an $(X,G)$ structure to its holonomy
	representation is a local homeomorphism on $\text{hol}^{-1}(
	\Hom(\pi_1(M),G)^{st}/G)$, where $\Hom(\pi_1(M),G)^{st}$ is the
	subset of $\Hom(\pi_1(M),G)$ consisting of stable orbits.
\end{thm*}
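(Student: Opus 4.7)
The plan is to prove this classical Ehresmann--Thurston principle by working upstairs, at the level of developing maps $D:\tilde M \to X$ together with their holonomy representations $\rho:\pi_1(M)\to G$, and descending to the quotient spaces at the end. Continuity of $\text{hol}$ is essentially tautological: a smoothly varying family $D_t$ of developing maps yields holonomies $\rho_t$ that vary smoothly because $\rho_t(\gamma)$ is computed by comparing $D_t$ at a basepoint of $\tilde M$ with its value at the deck translate. The work is therefore in constructing a local inverse: starting from an $(X,G)$-structure with developing pair $(D_0,\rho_0)$, any representation $\rho$ close to $\rho_0$ must be realized as the holonomy of a nearby $(X,G)$-structure.

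For the construction, I would choose a finite good cover $\{U_i\}$ of a compact thickening of $M_0 = M\setminus(\text{collar of }\partial M)$, with each $U_i$ and all multiple intersections contractible and evenly covered by $\tilde M\to M$. Fix a lift $\widetilde U_i\subset\tilde M$ of each $U_i$; the transitions between lifts are recorded by a finite collection of elements $\gamma_{ij}\in\pi_1(M)$, and equivariance of $D_0$ yields a cocycle-like compatibility on overlaps governed by $\rho_0(\gamma_{ij})$. If $\rho$ is close to $\rho_0$, then $g_{ij}:=\rho(\gamma_{ij})\rho_0(\gamma_{ij})^{-1}$ lies near the identity of $G$. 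Proceeding inductively up the skeleta of the nerve of $\{U_i\}$---or equivalently twisting each chart by a $G$-valued function built from a partition of unity subordinate to $\{U_i\}$---one modifies $D_0$ chart by chart to produce a new developing map $D$ whose transitions realize $\rho(\gamma_{ij})$ instead of $\rho_0(\gamma_{ij})$. Smoothness of the $G$-action on $X$, contractibility of the charts, and the fact that near the identity $G$ admits a canonical chart make the interpolation smooth, and the resulting $(X,G)$-structure has holonomy conjugate to $\rho$.

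For uniqueness, the standard rigidity statement for developing maps on connected manifolds says that two developments with the same holonomy differ by post-composition by a single element of $G$, and two structures whose developments differ by such a $G$-element together with a lifted diffeomorphism are equivalent by definition. Descending to the quotients $\mathcal{D}(M,(X,G))$ and $\mathcal{X}(\pi_1(M),G)$, the stability hypothesis $\Hom(\pi_1(M),G)^{st}$ ensures that $G$ acts with closed orbits and finite (or trivial) stabilizers on a neighborhood of $\rho_0$, so the GIT quotient is locally a topological quotient and the local homeomorphism passes down. I expect the main obstacle to be the chart-by-chart modification described above: one must arrange the small $G$-valued corrections so that they are simultaneously compatible on every multiple overlap, which is essentially a nonabelian \v{C}ech cohomology computation for the sheaf of germs of $G$-valued functions. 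This is precisely why the cover must be good, and once compatibility on all overlaps is secured, compactness of $M_0$ and smoothness of $G$ make the rest of the argument formal.
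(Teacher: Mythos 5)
The paper does not actually prove this statement; it is invoked as a classical theorem with a pointer to Goldman's article for the proof, so there is no ``paper's own proof'' to compare against. Your proposal reconstructs, in outline, the standard argument from that reference, and the surjectivity half (producing an $(X,G)$-structure realizing a nearby representation by a chart-by-chart twist over a good cover, with the compatibility on overlaps arranged by induction up the skeleta of the nerve) is essentially the right idea and is carried out in the literature exactly as you indicate.

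The injectivity half of your argument, however, rests on a false rigidity claim. It is \emph{not} true that two developing maps $D_1,D_2\colon\tilde M\to X$ with the same holonomy $\rho$ differ by post-composition by a single element of $G$. The rigidity principle for $(X,G)$-maps only says that if two $\rho$-equivariant local diffeomorphisms into $X$ agree on a nonempty open set, then they agree everywhere; it says nothing if they agree nowhere. Grafting a $\mathbb{C}P^1$-structure on a surface along a closed curve by a full $2\pi$ produces a structure with the same holonomy as the original but a genuinely different developing map, not related to the original by an element of $\PSL(2,\mathbb{C})$. Thus your ``uniqueness'' step does not prove what it needs to, namely that two structures \emph{near} a given one with conjugate holonomy are equivalent under the relation the paper defines (a $G$-element together with a small diffeomorphism away from $\partial M$). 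The correct argument, as in Goldman's treatment, packages an $(X,G)$-structure as a section of the flat $(X,G)$-bundle $E_\rho \to M$ transverse to the flat foliation; two structures with the same holonomy give two sections of the \emph{same} flat bundle, and nearby transverse sections are isotopic through transverse sections by a compactness and transversality argument on $M_0$. That isotopy is what supplies the diffeomorphism of $M$ in the equivalence relation, and it is a genuine input rather than a formality. With the uniqueness step repaired along these lines, the rest of your outline (including the remark that stability of the orbit is what lets the local homeomorphism descend from $\Hom$ to the GIT quotient $\mathcal{X}$) is sound.
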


Given a smooth family of
representations $\rho_t : \pi_1(M) \rightarrow G$, we can study
the infinitesimal change in $\rho_t$ at $\rho_0$, as in \cite{hodgson86}.
The derivative of the homomorphism condition
$\rho_t (ab) = \rho_t(a) \rho_t(b)$ yields
\begin{equation*}
	\rho_t'(ab) = \rho_t'(a)\rho_t(b) + \rho_t(a) \rho_t'(b).
\end{equation*}
In order to normalize the derivative, we multiply on the right by
$\rho_t(ab)^{-1}$ to translate back to the identity element to obtain
\begin{equation*}
	\rho_t'(ab)\rho_t(ab)^{-1} = \rho_t'(a) \rho_t(a)^{-1} +
		\rho_t(a) \rho_t'(b) \rho_t(b)^{-1} \rho_t(a)^{-1}.
\end{equation*}
The second term is defined to be
\begin{equation*}
	\Ad{\rho_t(a)}(\rho_t'(b) \rho_t(b)^{-1}) =
		\rho_t(a) \rho_t'(b) \rho_t(b)^{-1} \rho_t(a)^{-1}.
\end{equation*}

The Lie algebra of $G$, denoted by $\mathfrak{g}$, turns into a $\pi_1(M)$
module, with $\pi_1(M)$ acting via $\Ad{\rho_0}$. Then a cocycle of $\pi_1(M)$
with coefficients in $\mathfrak{g}$ twisted by $\Ad{\rho_0}$
 is defined as a map $z:\pi_1(M) \rightarrow \mathfrak{g}$, where
 $z(\gamma) = \rho'(\gamma)
\rho_0(\gamma)^{-1}$ and $\rho'$ is the derivative evaluated at $t=0$, such that
the map $z$ satisfies the cocycle condition
\begin{equation}
	z(ab) = z(a) + \Ad{\rho_0(a)} z(b).\label{eqn:cocycle}
\end{equation}
The group of all maps satisfying the cocycle condition in Equation
\eqref{eqn:cocycle} is defined to be $Z^1(\pi_1(M),\mathfrak{g}_{\Ad{\rho_0}})$.
Differentiating the triviality condition for representations
$\rho_t = g_t \rho_0 g_t^{-1}$ yields the coboundary condition
\begin{equation}
	z(\gamma) = u - \Ad{\rho_0(\gamma)}u \label{eqn:coboundary}
\end{equation}
for some $u \in \mathfrak{g}$. The set of cocycles satisfying Equation
\eqref{eqn:coboundary} are defined to be $B^1(\pi_1(M),\mathfrak{g}_{\Ad{\rho_0}})$, the
set of coboundaries of $\pi_1(M)$ with coefficients in $\mathfrak{g}$ twisted by
$\Ad{\rho_0}$.
Weil \cite{weil64, lubotzky85} has  noted that $Z^1(\pi_1(M),\mathfrak{g}_{\Ad{\rho_0}})$
contains the tangent space to $R(\pi_1(M),G)$ at $\rho_0$ as a subspace.
Provided that we can show that the representation variety
at $\rho_0$ is smooth, we can study the space of cocycles to
determine the first order behavior of deformations of a representation $\rho_0$.

%removed
%\begin{thm*}
%	If $\mathcal{R}(\pi_1(M),G)$ is smooth at $\rho_0$, then the cohomology
%	group,
%	\begin{equation*}
%		H^1(\pi_1(M),\mathfrak{g}_{\Ad{\rho_0}}) =
%			Z^1(\pi_1(M),\mathfrak{g}_{\Ad{\rho_0}}) / 
%			B^1(\pi_1(M),\mathfrak{g}_{\Ad{\rho_0}}),
%	\end{equation*}
%	describes the (Zariski) tangent
%	space to $\mathcal{R}(\pi_1(M),G)$ at $\rho_0$.
%\end{thm*}

\subsection{Hyperbolic geometry}

The hyperboloid model for $\mathbb{H}^3$ is described as a subspace of
$\mathbb{R}^{1,3}$.
Topologically, $\mathbb{R}^{1,3}$ is the space $\mathbb{R}^4$, but it is
endowed with the Lorentzian metric $ds^2 = -dx_1^2+dx_2^2+dx_3^2+dx_4^2$. Then,
\begin{equation*}
	\mathbb{H}^3 = \{\vec{x}=(x_1,x_2,x_3,x_4)\in \mathbb{R}^{1,3} :
		||\vec{x}||=-1, x_1>0\}
\end{equation*}
with the metric induced by $ds$ is
isometric to $\mathbb{H}^3$. The isometry group of $\mathbb{H}^3$ in
the hyperboloid model is the identity component $\SO^+(1,3)$ of
$\SO(1,3)$. Each point in the hyperboloid model
intersects exactly 1
line through the origin in $\mathbb{R}^{1,3}$. Hence, we can also identify
the hyperboloid with a subset of $\mathbb{R}P^3$, given by
\begin{equation*}
	\mathbb{H}^3 = \{[\vec{x}]=[x_1,x_2,x_3,x_4]\in \mathbb{R}P^3
		: ||\vec{x}||<0\}.
\end{equation*}

There is a well-known method for taking an isometry of $\mathbb{H}^3$ from
the upper
half-space model (i.e. an element $A \in \text{PSL}(2,\mathbb{C})$) to the
corresponding isometry in the hyperboloid model (see for instance
\cite[p. 66]{abbaspour07}). First, a point
$(x_1,x_2,x_3,x_4)$ from the hyperboloid model is identified with the matrix
\begin{equation*}
	P(x_1,x_2,x_3,x_4)=\begin{bmatrix}x_1+x_2 & x_3+ix_4\\
		x_3-ix_4& x_1-x_2\end{bmatrix}.
\end{equation*}
Then, $A$ acts on the point $(x_1,x_2,x_3,x_4)$ by
\begin{equation*}
	A P(x_1,x_2,x_3,x_4) A^*,
\end{equation*}
where $A^*$ denotes the Hermitian transpose of $A$. This operation
preserves $\det P = x_1^2-x_2^2-x_3^2-x_4^2$, so it sends points of
the hyperboloid in $\mathbb{R}^{1,3}$ to points of
the hyperboloid. The corresponding isometry
in the hyperboloid model is the element $A' \in \SO(1,3)$ so that
\begin{equation*}
	A P(x_1,x_2,x_3,x_4) A^* = P(A'(x_1,x_2,x_3,x_4)).
\end{equation*}

\subsection{Sol geometry}\label{sec:sol}

Topologically, Sol is $\mathbb{R}^3$, with the metric
$ds^2 = e^{2z}dx^2+e^{-2z}dy^2+dz^2$. In this model for $\Sol$, one can
see that by restricting to any plane $x=\text{constant}$,  we obtain a 2-dimensional
space that is isometric to the hyperbolic plane via the upper half-plane model.
Restricting to the plane $y=\text{constant}$ also yields a space isometric to
the hyperbolic plane as the lower half-plane model.

Sol also has an embedding into $\mathbb{R}P^3$ by
\begin{equation*}
	(x,y,z) \mapsto \begin{bmatrix}\cosh z\\ \sinh z \\ e^zx \\ e^{-z}y
		\end{bmatrix}.
\end{equation*}
The image of this map gives Sol as the subspace
\begin{equation*}
	\Sol = \{ [x_1,x_2,x_3,x_4] \in \mathbb{R}P^3: -x_1^2+x_2^2 < 0 \}.
\end{equation*}
The group $\PGL(4)$ contains the identity component of the isometry group of Sol
inside $\mathbb{R}P^3$ as elements of the form
\begin{equation*}
	\begin{bmatrix} \cosh c & \sinh c & 0 & 0\\
		\sinh c & \cosh c & 0 & 0\\
		a e^{c} & a e^{c} & 1 & 0\\
		b e^{-c} & -b e^{-c} & 0 & 1\end{bmatrix},
\end{equation*}
where $a,b,c \in \mathbb{R}$. Other components can be found by
multiplying the diagonal $2 \times 2$ blocks by $\pm 1$ or the upper
left $2\times 2 $ block by $\begin{bmatrix}0&1\\1&0\end{bmatrix}$.
A further treatment of
Sol geometry can be found in \cite{bonahon02}.

\subsection{HP geometry}\label{sec:HP}

There are also multiple copies of $\mathbb{H}^3$ lying inside
$\mathbb{R}^4$. For each $s > 0$, we can take the hyperboloid 
\begin{equation*}
	\mathbb{H}^3_s = \{ \vec{x} = (x_1,x_2,x_3,x_4) 
		: -x_1^2 + x_2^2 + x_3^2 + s^2x_4^2 = -1, x_1>0\},
\end{equation*}
and the subgroup $G_s$ of $\PGL(4,\mathbb{R})$ preserving the form
\begin{equation*}
	-x_1^2 + x_2^2 + x_3^2 + s^2 x_4^2,
\end{equation*}
to obtain a space isometric to $\mathbb{H}^3$. The isometry to the usual
hyperboloid model of $\mathbb{H}^3$ is given by the
rescaling map 
\begin{equation*}
	\mathfrak{r}_s = \begin{bmatrix} 1 & 0 & 0 & 0 \\ 0 & 1 & 0 & 0\\
		0 & 0 & 1 & 0 \\ 0 & 0 & 0 & s^{-1} \end{bmatrix}.
\end{equation*}

Geometrically, we can think of the family of hyperboloids, $\mathbb{H}^3_s$,
as flattening out to $\mathbb{H}^2 \times \mathbb{R}$ in $\mathbb{R}^4$.
Taking the limit as $s \rightarrow 0$ yields a model for half-pipe geometry.

Danciger \cite{danciger13} studies degenerations of singular
hyperbolic structures using the projective models. An appropriate
rescaling of the degeneration yields half-pipe (HP) geometry, a transition
geometry between hyperbolic geometry and anti-de Sitter (AdS) geometry.

Three-dimensional HP geometry, $\HP^3$, topologically is $\mathbb{R}^3$.
In terms of representations, it can be described as a rescaling of the
collapse of the structure group from $\SO(1,3)$ to $\SO(1,2)$.
Begin with a representation $\rho_1$ of $\pi_1(M)$ into $\SO(1,3)$, and
describe the collapse of the manifold in the $x_4$ coordinate by a family of
representations $\rho_t$, so that we end with a representation $\rho_0$ into
$\SO(1,2) \subset \SO(1,3)$ of matrices of the form
\begin{equation*}
	\rho_0(\gamma) = \begin{bmatrix} A \in \SO(1,2) & 0\\0 & 1\end{bmatrix}.
\end{equation*}
Conjugate the path of representations $\rho_t$ degenerating in this matter by
\begin{equation*}
	\mathfrak{r}(t) = 
	\begin{bmatrix} 1 & 0 & 0 & 0\\ 0 & 1 & 0 & 0\\ 0 & 0 & 1 & 0\\ 0 & 0 & 0 &
		t^{-1} \end{bmatrix},
\end{equation*}
and take the limit as $t \rightarrow 0$. This will yield a representation
$\rho_{\HP}$ whose image lies in the set of matrices of $\SO(1,3)$ of the
form
\begin{equation}
	\lim_{t \rightarrow 0} \mathfrak{r}(t) \rho_t(\gamma) \mathfrak{r}(t)^{-1} =
	\begin{bmatrix} A \in \SO(1,2) & 0 \\ \vec{v}^T & 1\end{bmatrix}
	=\rho_{\HP}(\gamma)
	\label{eqn:hpmatrix}
\end{equation}
where $\vec{v}^T$ is the transpose of a vector in $\mathbb{R}^3$. The vector
$\vec{v}$ can
be interpreted as an infinitesimal deformation of $A$ into $\SO(1,3)$.
A path of representations $\rho_t$ satisfying Equation \eqref{eqn:hpmatrix} is
said to be \textit{compatible to first order} with $\rho_\HP$.
The map $\mathfrak{r}(t)$ takes the standard
copy of $\mathbb{H}^3$ inside $\mathbb{R}^{1,3}$ to the isometric copy
$\mathbb{H}^3_t$. As we take the limit $t \rightarrow 0$, we obtain
$\HP^3$ as
\begin{equation*}
	\HP^3 = \lim_{t \rightarrow 0} \mathbb{H}^3_t = \{ (x_1,x_2,x_3,x_4)
		: -x_1^2 + x_2^2 +x_3^2 = -1, x_1>0 \}.
\end{equation*}
As a subset of $\mathbb{R}P^3$, we can think of $\HP^3$ as
\begin{equation*}
	\HP^3 = \{ [x_1,x_2,x_3,x_4]
		: -x_1^2 + x_2^2 +x_3^2 < 0 \}.
\end{equation*}
The structure group $G_\HP$ is the set of matrices of the form in Equation
\eqref{eqn:hpmatrix}.

A concrete description of $\vec{v}$ can be found by generalizing the isomorphism
$\SO(1,3) \cong \PSL(2,\mathbb{C})$. Let $\kappa_s$ be a non-zero
element such that $\kappa_s^2=-s^2$, and define an algebra $\mathcal{B}_s =
\mathbb{R} + \mathbb{R}\kappa_s$ generated over
$\mathbb{R}$ by $1$ and $\kappa_s$. Furthermore, define a conjugation by
\begin{equation*}
	a+b\kappa_s \mapsto \overline{a+b\kappa_s} = a-b \kappa_s.
\end{equation*}
Then let $A^*$ be the conjugate transpose of $A$.

We can define a map
$P_s = \mathbb{H}^3_s \subset \mathbb{R}^{1,3} \rightarrow \text{Herm}(
2,\mathcal{B}_s)$ by
\begin{equation*}
	P_s(x_1,x_2,x_3,x_4) = \begin{bmatrix} x_1 + x_2 & x_3 + \kappa_s x_4\\
		x_3 - \kappa_s x_4 & x_1 - x_2 \end{bmatrix}
\end{equation*}
where $\text{Herm}(2,\mathcal{B}_s)$ is the set of $2 \times 2$ matrices
with entries in $\mathcal{B}_s$ such that $A = A^*$. Then define the map
$\PSL(2,\mathcal{B}_s)\rightarrow G_s$ by $A \mapsto A'$ where
$A'$ is the matrix that satisfies
\begin{equation*}
	A P_s(x_1,x_2,x_3,x_4) A^* = P(A'(x_1,x_2,x_3,x_4)).
\end{equation*}
When $s=1$, this is the usual isometry from $\PSL(2,\mathbb{C})$
to $\SO(1,3)$. Danciger proves the following:

\begin{thm*}[\cite{danciger13}, Propositions 4.15, 4.19]
	For $s > 0$, the map $\PSL(2,\mathcal{B}_s) \rightarrow G_s$ is
	an isomorphism. When $s=0$, the map $\PSL(2,\mathcal{B}_0)
	\rightarrow G_0$ is an isomorphism onto the group of $\HP$ matrices.
\end{thm*}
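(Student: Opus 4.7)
The plan is to imitate the classical identification $\PSL(2,\mathbb{C}) \cong \SO^+(1,3)$ and to track how the argument behaves as $s \to 0$. The starting observation is the identity
\[
\det P_s(\vec x) = (x_1+x_2)(x_1-x_2) - (x_3+\kappa_s x_4)(x_3-\kappa_s x_4) = x_1^2 - x_2^2 - x_3^2 - s^2 x_4^2,
\]
which is exactly the negative of the quadratic form $Q_s$ preserved by $G_s$. Since $P_s : \mathbb{R}^4 \to \text{Herm}(2,\mathcal{B}_s)$ is a real-linear bijection, isometries of $Q_s$ are in bijection with determinant-preserving real-linear maps of Hermitian matrices, and it is on this latter picture that $\PSL(2,\mathcal{B}_s)$ acts naturally.

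First I would check that $A \mapsto A'$ is a well-defined homomorphism. For $A \in \SL(2,\mathcal{B}_s)$ the matrix $A P_s(\vec x) A^*$ is visibly Hermitian, so equals $P_s(A'(\vec x))$ for a unique $A' \in \mathrm{GL}(4,\mathbb{R})$; and $\det(A P_s(\vec x) A^*) = \det A \cdot \overline{\det A} \cdot \det P_s(\vec x) = \det P_s(\vec x)$, so $A'$ preserves $Q_s$. The relation $(AB)P_s(\vec x)(AB)^* = A(BP_s(\vec x)B^*)A^*$ gives the homomorphism property. For the kernel, if $AP_s(\vec x)A^* = P_s(\vec x)$ for all $\vec x$, then $A$ commutes with every Hermitian matrix, forcing $A = \lambda I$ with $\lambda \bar\lambda = 1$ and $\lambda^2 = 1$. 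When $s > 0$ this forces $\lambda = \pm 1$; when $s = 0$, writing $\lambda = a + b\kappa_0$ and using $\kappa_0^2 = 0$ again yields $\lambda = \pm 1$.

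The main work, and the expected main obstacle, is surjectivity, especially at $s = 0$. For $s > 0$, both $\PSL(2,\mathcal{B}_s)$ and the identity component of $G_s$ are real Lie groups of dimension $6$, so it suffices to check that the differential at the identity is injective on Lie algebras; this is a routine computation mirroring the classical $s=1$ case, and surjectivity onto the identity component then follows by connectedness. For $s = 0$, I would decompose an arbitrary $A \in \SL(2,\mathcal{B}_0)$ as $A = A_0 + \kappa_0 A_1$ with $A_0, A_1 \in M_2(\mathbb{R})$; expanding $\det A = 1$ and using $\kappa_0^2 = 0$ together with Jacobi's formula yields $\det A_0 = 1$ and $\mathrm{tr}(A_0^{-1} A_1) = 0$, so $A_0 \in \SL(2,\mathbb{R})$ and $A_0^{-1} A_1 \in \mathfrak{sl}(2,\mathbb{R})$, giving a $6$-dimensional parameter space matching $\dim G_\HP = \dim \SO(1,2) + \dim \mathbb{R}^3$.

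Finally I would expand $AP_0(\vec x)A^*$ in powers of $\kappa_0$. The $\kappa_0^0$-part realizes the classical action of $A_0 \in \SL(2,\mathbb{R})$ on $(x_1,x_2,x_3)$ as an element of $\SO(1,2)$ while fixing $x_4$, recovering the upper-left block of the $\HP$ matrix in Equation \eqref{eqn:hpmatrix}. The $\kappa_0^1$-part is bilinear in $A_1$ and in $(x_1,x_2,x_3)$ and contributes only to the $x_4$-coordinate of $A'(\vec x)$, producing precisely a last row of the form $(\vec v^T, 1)$. A direct rank check shows that as $A_1$ varies over the three-dimensional coset $A_0 \cdot \mathfrak{sl}(2,\mathbb{R})$, the resulting $\vec v$ sweeps out all of $\mathbb{R}^3$, so the image is exactly $G_\HP$. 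Combined with the kernel and homomorphism computations, this gives the claimed isomorphism in both the $s > 0$ and $s = 0$ cases.
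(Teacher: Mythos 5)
This statement is quoted from Danciger (\cite{danciger13}, Propositions 4.15 and 4.19); the paper you are reading does not supply its own proof, so there is nothing internal to compare against. Your argument does, however, follow the same architecture as Danciger's original proof: identify $\det P_s$ with (the negative of) the quadratic form $Q_s$, verify that conjugation on Hermitian matrices is a determinant-preserving linear action, compute the kernel, and then handle $s=0$ by expanding $A = A_0 + A_1\kappa_0$ and $P_0 = X + Y\kappa_0$ over the dual numbers. That is the right idea and the dimension counts you give are the correct ones.

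Two small points deserve tightening. First, in the $\kappa_0$-expansion one gets
\begin{equation*}
	A P_0 A^* = A_0 X A_0^T + \bigl(A_1 X A_0^T - A_0 X A_1^T + A_0 Y A_0^T\bigr)\kappa_0,
\end{equation*}
and the term $A_0 Y A_0^T = (\det A_0)\,Y = Y$ is precisely what forces the bottom-right entry of $A'$ to be $1$ rather than $0$; your description of the $\kappa_0$-part as ``bilinear in $A_1$ and $(x_1,x_2,x_3)$'' covers only the $\vec v$ portion and silently omits this term, so you should make it explicit before concluding the last row has the form $(\vec v^T,1)$. Second, for $s>0$ your surjectivity argument (dimension count plus connectedness) only lands in the \emph{identity component} of the group preserving $Q_s$; this is fine so long as $G_s$ is taken to be that component (as it is in Danciger), but as written your proposal does not say so, and the full projective orthogonal group of a Lorentzian form has two components. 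With those two clarifications, the proof is correct and is essentially Danciger's own.
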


Moreover, in the case $s=0$, we obtain a geometric interpretation for the
vector $\vec{v}$ in Equation \eqref{eqn:hpmatrix}. If we have a matrix in
$\PSL(2,\mathcal{B}_0)$, we can write it as $A+B \kappa_0$, where
$A$ is symmetric and $B$ is skew-symmetric. Similarly, we can write
$P_0(x_1,x_2,x_3,x_4) = X+Y\kappa_0$ where 
\begin{align*}
	X &= \begin{bmatrix}x_1+x_2&x_3\\x_3 & x_1-x_2\end{bmatrix}\\
	Y &= \begin{bmatrix}0&x_4\\-x_4&0\end{bmatrix}.
\end{align*}
Then $(A+B\kappa_0)(X+Y\kappa_0)(A+B\kappa_0)^* = AXA^T + (BXA^T - AXB^T +AYA^T)
\kappa_0$. In the map $\text{PSL}(2,\mathcal{B}_0)\rightarrow G_0$, the
symmetric part $AXA^T$ determines the first three rows of the HP matrix, and
the skew-symmetric part $(BXA^T - AXB^T + AYA^T)$ determines the bottom row of the
HP matrix.

\begin{lem*}[\cite{danciger13}, Lemma 4.20]
	Let $A+B\sigma$ have determinant $\pm1$. Then $\det A = \det(A+B\sigma)=\pm 1$
	and $\text{tr} BA^{-1} = 0$. In other words $B$ is in the tangent
	space at $A$ of matrices of constant determinant $\pm 1$.
\end{lem*}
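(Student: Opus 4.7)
The plan is to prove everything by a direct expansion of the $2\times 2$ determinant, using the crucial feature that $\kappa_0^2=0$ in the algebra $\mathcal{B}_0$ (so $\mathcal{B}_0$ is the ring of dual numbers, and $\sigma=\kappa_0$ is a square-zero element). The lemma is essentially Jacobi's formula $\frac{d}{dt}\det A(t)=\det(A(t))\,\mathrm{tr}(A(t)^{-1}A'(t))$ in disguise, with $\sigma$ playing the role of the infinitesimal parameter $dt$.

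First I would observe that an element $x+y\sigma\in\mathcal{B}_0$ equals $\pm 1$ if and only if $x=\pm 1$ and $y=0$, since $\{1,\sigma\}$ is a basis of $\mathcal{B}_0$ over $\mathbb{R}$. Then I would write
\begin{equation*}
    A+B\sigma=\begin{bmatrix}a_1+a_2\sigma & b_1+b_2\sigma\\ c_1+c_2\sigma & d_1+d_2\sigma\end{bmatrix},
\end{equation*}
with $A=(a_1,b_1;c_1,d_1)$ and $B=(a_2,b_2;c_2,d_2)$, and expand
\begin{equation*}
    \det(A+B\sigma)=(a_1+a_2\sigma)(d_1+d_2\sigma)-(b_1+b_2\sigma)(c_1+c_2\sigma).
\end{equation*}
The $\sigma^2$ terms vanish, so this collapses to
\begin{equation*}
    \det(A+B\sigma)=\det A+(a_1d_2+a_2d_1-b_1c_2-b_2c_1)\,\sigma=\det A+\mathrm{tr}\bigl(\mathrm{adj}(A)\,B\bigr)\,\sigma,
\end{equation*}
where $\mathrm{adj}(A)$ is the classical adjugate.

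Setting this equal to $\pm 1$ and comparing the $1$-component and the $\sigma$-component gives immediately $\det A=\pm 1$ and $\mathrm{tr}(\mathrm{adj}(A)\,B)=0$. Since $A$ is then invertible with $\mathrm{adj}(A)=(\det A)A^{-1}$, the second equation becomes $(\det A)\,\mathrm{tr}(A^{-1}B)=0$, and since $\mathrm{tr}(A^{-1}B)=\mathrm{tr}(BA^{-1})$, we conclude $\mathrm{tr}(BA^{-1})=0$, which is the desired identity.

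For the geometric interpretation, I would point out that along any smooth path $A(t)$ in $\{M:\det M=\pm 1\}$ with $A(0)=A$ and $A'(0)=B$, Jacobi's formula gives $0=\frac{d}{dt}\det A(t)\big|_{t=0}=\det(A)\,\mathrm{tr}(A^{-1}B)$; the condition $\mathrm{tr}(BA^{-1})=0$ is precisely this tangency condition. There is no real obstacle here — the content of the lemma is that determinant in $\mathcal{B}_0$ encodes value together with first-order variation of the real determinant, so the mild thing to be careful about is just bookkeeping the $\sigma$-coefficient and recognizing it as $\mathrm{tr}(\mathrm{adj}(A)B)$.
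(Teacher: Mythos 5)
Your proof is correct. In fact, the paper does not prove this lemma at all — it is merely quoted from Danciger's work \cite{danciger13} (Lemma 4.20) and used as a black box, so there is no in-paper argument to compare against. Your direct expansion, using $\sigma^2=0$ so that $\det(A+B\sigma)=\det A + \mathrm{tr}(\mathrm{adj}(A)\,B)\,\sigma$, and then equating coefficients of $1$ and $\sigma$, is the canonical argument and is exactly what the structure of $\mathcal{B}_0$ as a dual-number algebra gives you; identifying the $\sigma$-coefficient with $\frac{d}{dt}\big|_{t=0}\det(A+tB)$ via Jacobi's formula is the right geometric reading of the statement. One small expository note: in the surrounding text the paper says that an element of $\PSL(2,\mathcal{B}_0)$ ``can be written as $A+B\kappa_0$, where $A$ is symmetric and $B$ is skew-symmetric,'' but that symmetry/skew-symmetry applies to the Hermitian point-matrices $P_0(x)$, not to arbitrary group elements; your proof correctly makes no such assumption on $A$ or $B$, which is consistent with the generality of the lemma as stated.
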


Hence, when mapped into $\mathbb{R}P^3$, the symmetric part is the usual
map $\PSL(2,\mathbb{R}) \rightarrow \SO(1,2)$, and the bottom row of an HP
matrix comes from the skew-symmetric part. The vector $\vec{v}$ in the HP matrix of
Equation \eqref{eqn:hpmatrix} is an infinitesimal deformation of the $\SO(1,2)$
matrix from the collapsed structure.

The key result about $\HP$ structures is that we can recover hyperbolic
structures from them \cite[Proposition 3.6]{danciger13}.
Thus, if we can find an HP structure for $M_\phi$ and construct a transition at
the level of representations, then we can deform it to
nearby hyperbolic and AdS structures.

\section{The metabelian representation}\label{sec:metabelian}

Let $\phi:S \rightarrow S$ be a pseudo-Anosov homeomorphism with orientable
invariant foliations $\mathcal{F}^s, \mathcal{F}^u$ with singular set
$\sigma = \{s_0,s_1,\dots,s_n\}$
and transverse measures $\mu_s$ and $\mu_u$. If $S$ has a puncture $p_0$,
then we can fill in the puncture by taking $\bar{S} = S \cup \{p_0\}$. Either the
measured foliations extend smoothly to $p_0$, or $p_0$ is a singular point
of the foliation. In either case, we simply include $p_0$ in the set $\sigma$,
so we can simplify our analysis to the case where $S$ is closed.
The orientability assumption gives us some control over the eigenvalues of
$\phi^*:H^1(S) \rightarrow H^1(S)$. It also implies that the cone angles 
at the singular points in the singular Euclidean metric induced by the measured
foliations are multiples of $2\pi$ -- in particular, they are larger
than $2\pi$.

The following is a basic result about the eigenvalues of a pseudo-Anosov map,
see \cite{fathi79}, \cite{mcmullen03}, \cite{penner91}.

\begin{lem}[c.f. McMullen \cite{mcmullen03}, Theorem 5.3] \label{lem:eigenvalues}
	Let $\phi$ be a pseudo-Anosov homeomorphism with dilatation factor
	$\lambda$.
	Suppose also that $\phi$ has orientable unstable and stable foliations,
	$\mathcal{F}^u$ and $\mathcal{F}^s$.
	Then $\lambda$ and $\lambda^{-1}$ are simple eigenvalues of $\phi^*$.
\end{lem}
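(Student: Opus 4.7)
The plan is to exhibit $\lambda$ and $\lambda^{-1}$ explicitly as eigenvalues via closed $1$-forms coming from the invariant foliations, and then to use the cup-product pairing together with the dynamics of $\phi$ to deduce simplicity.

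Orientability of $\mathcal{F}^u$ allows one to convert the measured foliation into a closed $1$-form $\omega_u$ on $\bar S$: choose a transverse orientation, and take $\omega_u$ to be the form whose kernel is the tangent line field of $\mathcal{F}^u$ and whose integral along a transverse arc recovers $\mu_u$. Orientability forces the number of prongs at each singular point to be even, so $\omega_u$ extends as a smooth closed form vanishing to the appropriate order and defines a class $[\omega_u] \in H^1(\bar S;\mathbb R)$; define $[\omega_s]$ analogously. Because $\phi$ preserves the foliations and scales the transverse measures by $\lambda$ and $\lambda^{-1}$, a direct computation gives $\phi^* \omega_u = \lambda^{\pm 1} \omega_u$ and $\phi^* \omega_s = \lambda^{\mp 1}\omega_s$, exhibiting $\lambda^{\pm 1}$ as eigenvalues of $\phi^*$ on $H^1(\bar S;\mathbb R)$.

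For simplicity, I would exploit the cup product on $H^1(\bar S;\mathbb R)$, which is non-degenerate and preserved by $\phi^*$ (since $\phi$ is orientation preserving, $\phi^*$ acts as the identity on $H^2$). Generalized eigenspaces $V_\mu, V_\nu$ of $\phi^*$ satisfy $V_\mu \cup V_\nu = 0$ whenever $\mu \nu \neq 1$, so $V_\lambda$ and $V_{\lambda^{-1}}$ are dually paired under the cup product and in particular have equal dimension. Moreover $[\omega_u]\cup[\omega_s] = \int_{\bar S} \omega_u\wedge\omega_s \neq 0$ because transversality of the foliations together with compatible orientations makes the integrand nowhere-negative. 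To rule out higher multiplicity I would invoke unique ergodicity of $\mathcal{F}^u$: any eigenvector for $\lambda$ linearly independent of $[\omega_u]$ could be realized, after a suitable exact correction, as a second $\phi$-invariant transverse measure on $\mathcal{F}^u$ not proportional to $\mu_u$, contradicting unique ergodicity. Alternatively one can carry the foliations on a $\phi$-invariant train track and appeal to Perron--Frobenius for the transition matrix, whose simple dominant eigenvalue is $\lambda$. Applying either argument to $\phi^{-1}$ (pseudo-Anosov with dilatation $\lambda$ and roles of stable and unstable swapped) gives simplicity of $\lambda^{-1}$.

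The main obstacle is this last simplicity step: producing $\lambda^{\pm 1}$ as eigenvalues is essentially formal once the closed $1$-forms are in hand, but ruling out a generalized eigenvector outside the line spanned by $[\omega_u]$ or $[\omega_s]$ requires genuine pseudo-Anosov dynamics---either the unique ergodicity of the invariant measured foliations or the Perron--Frobenius structure of an invariant train-track transition matrix---both of which I would cite (following McMullen) rather than reprove.
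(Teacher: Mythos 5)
Your production of $[\omega_u],[\omega_s]$ as eigenvectors of $\phi^*$ is essentially identical to the paper's. The divergence is in the simplicity argument, and your primary route has a gap. You claim that an eigenvector $v$ for $\lambda$ independent of $[\omega_u]$ could be realized, after an exact correction, as a second $\phi$-invariant transverse measure on $\mathcal{F}^u$; but that is exactly the assertion that $v$ admits a closed $1$-form representative annihilating $T\mathcal{F}^u$, and nothing about being a $\phi^*$-eigenvector in $H^1(\bar S)$ gives you this. In fact, precisely \emph{because} $\mathcal{F}^u$ is minimal and uniquely ergodic, the space of closed forms vanishing on $T\mathcal{F}^u$ (in the Schwartzman--Sullivan picture, invariant transverse signed measures) maps to the single line $\mathbb{R}[\omega_u]$ in $H^1$, so the ``exact correction'' you need simply is not available for a hypothetical independent eigenvector. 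Supplying that step would require a dynamical averaging or convergence argument of essentially the same strength as the statement to be proved. Your Perron--Frobenius alternative via an orientable invariant train track $\tau$ is sound in outline, provided you spell out the $\phi$-equivariant surjection from the weight space $H_1(\tau;\mathbb{R})$ onto $H_1(\bar S;\mathbb{R})$ (which uses that the complementary regions of a filling track are disks) and note that the Perron eigenvector $\mu_u$ maps to the nonzero class $[\omega_u]$.

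The paper takes a third route: for $\omega$ Poincar\'e dual to a simple closed curve $\gamma$, the projective convergence of $\phi^n(\gamma)$ to $\mathcal{F}^u$ gives $\lambda^{-n}(\phi^*)^n\omega \to c\,\omega_+$; since such $\omega$ span $H^1(\bar S)$, this forces the $\lambda$-eigenspace to be $\mathbb{R}\omega_+$, and boundedness of the limit rules out a Jordan block (a generalized eigenvector would yield linear growth of $\lambda^{-n}(\phi^*)^n\omega$). This uses the same deep dynamical input --- north--south dynamics of $\phi$ on projective measured foliations, hence unique ergodicity --- but packaged so as to avoid the eigenvector-to-transverse-measure passage that is the gap in your version. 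Your cup-product observation (that the generalized eigenspaces $V_\lambda$ and $V_{\lambda^{-1}}$ are dually paired and that $[\omega_u]\cup[\omega_s]\neq 0$) is correct, but as you note yourself it does not by itself constrain the multiplicity, and it plays no role in closing the gap.
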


\begin{proof}
	If $\mathcal{F}^u$ and $\mathcal{F}^s$ are orientable, then their transverse
	measures $\mu_u, \mu_s$
	represent cohomology classes $\omega_\pm \in H^1(S)$. The fact
	that $\phi$ scales the invariant measures by $\lambda^{\pm 1}$
	implies that $\phi^* (\omega_\pm) = \lambda^{\pm 1} \omega_\pm$, so that
	$\lambda^{\pm 1}$ are eigenvalues of $\phi^*$.
	
	Let $\omega \in H^1(S)$ be any cohomology class dual to a simple closed
	curve $\gamma$. Since $\phi$ is pseudo-Anosov, $\phi^{\pm n}(\gamma)$ limits to
	the either $\mathcal{F}^u$ or $\mathcal{F}^s$. In particular,
	\begin{equation}\label{eqn:eigenvalue}
		\frac{(\phi^*)^{\pm n} \omega}{\lambda^{\pm n}} \rightarrow c \omega_\pm
	\end{equation}
	for some $c\neq 0$. Since the classes $\omega$ dual to simple closed curves
	span $H^1(S)$, the eigenspaces for $\lambda^{\pm 1}$ are 1-dimensional. In 
	fact, $\lambda^{\pm 1}$ must be simple eigenvalues by considering the Jordan
	canonical form. If there existed a generalized eigenvector $\omega$ such that
	$\phi^* \omega = \omega_\pm + \lambda^{\pm 1}\omega$, we would have
	$(\phi^*)^{\pm n}(\omega) = n \lambda^{\pm (n-1)} \omega_\pm
	+ \lambda^{\pm n} \omega$, so that the condition in Equation \eqref{eqn:eigenvalue}
	is not satisfied.
\end{proof}

Note that in addition to $\lambda$ and $\lambda^{-1}$ being simple eigenvalues,
we also have that the corresponding eigenvectors come from the measures
$\mathcal{F}^u$ and $\mathcal{F}^s$. In particular, if we take $\gamma_1,
\gamma_2,\dots,\gamma_{2g}$ to be a basis for $H_1(S)$, then the eigenvector
$\vec{e}_\lambda$ is given by
\begin{equation*}
	\vec{e}_\lambda = \begin{pmatrix} \mu_u (\gamma_1)\\
		\mu_u (\gamma_2)\\
		\vdots\\
		\mu_u(\gamma_{2g})\end{pmatrix},
\end{equation*}
where the transverse measure $\mu_u$ is taken to be a signed measure, i.e.
$\mu_u(-\gamma) = -\mu_u(\gamma)$, if $-\gamma$ is the closed curve
$\gamma$ taken with the orientation opposite that of $\mathcal{F}^u$. The
eigenvector corresponding to $\lambda^{-1}$ is given by
\begin{equation*}
	\vec{e}_{\lambda^{-1}} = \begin{pmatrix} \mu_s (\gamma_1)\\
		\mu_s (\gamma_2)\\
		\vdots\\
		\mu_s(\gamma_{2g})\end{pmatrix}.
\end{equation*}

Choose a disk $D$ that contains all of the points in $\sigma$, and fix a point
on $\partial D$ as the base point for $\pi_1(S\setminus \sigma)$.
Let $\delta_1,\delta_2,\dots,\delta_n$ be generators of
$\pi_1(S\setminus \sigma)$, so that each $\delta_i$ encircles exactly one
singularity $s_i$, each $\delta_i$ lies entirely inside $D$, and the
product $\delta_1 \delta_2 \cdots \delta_n$ is homotopic to the boundary
$\partial D$.

Choose standard generators $\alpha_1,\alpha_2,\dots,\alpha_{g}$ and
$\beta_1,\beta_2,\dots,\beta_g$ of $\pi_1(S)$ such that for each $i$,
 (a representative of) $\alpha_i$ and $\beta_i$ do not intersect
$\partial D$ for $i=1,\dots,g$, except at the basepoint for $\pi_1$.
%The algebraic intersection number on oriented
%curves is a symplectic form on $H^1(S)$, and we also choose $\alpha_i$ and
%$\beta_i$ so that they are a standard symplectic basis for this form.
We will also refer to these curves as
$\gamma_i = \alpha_i$, $\gamma_{g+i} = \beta_i$,
$\gamma_{2g+j} = \delta_j$.
When convenient, we will use $\alpha_i,\beta_i,$
and $\delta_j$ to refer to their respective homology classes.

On the dual generators $\alpha_i^*, \beta_i^*, \delta_j^*$ of
$H^1(S\setminus \sigma)$, $\phi^*$ has a block
upper triangular action: the first block on the diagonal
corresponding to the action on the closed
surface $S$, and the second block a permutation of the generators
$\delta_1^*,\dots,\delta_n^*$ coming from the curves around the singular
points. Strictly speaking,
this matrix is a square matrix with dimensions one greater than the dimension
of $H^1(S \setminus \sigma)$. There is one redundancy in the generators by the
relation $\sum_{j=1}^n \delta_j = 0$ in homology. However, using the additional
generator from the singularities makes the lower right block for $\phi^*$ easier
to understand. When discussing $H^1(S \setminus \sigma)$ (or $\phi^*$) in
this section, it will mean $H^1(S\setminus \sigma)$ with this additional
generator (resp. the action on $H^1(S \setminus \sigma)$ with the additional
generator).

Using these generators for $\pi_1(S\setminus \sigma)$, we can describe
$\Gamma = \pi_1(N_\phi=M_\phi \setminus \Sigma)$ by the following presentation.

\begin{align*}
	\Gamma = \left\langle \{\alpha_i\},\{\beta_i\},\{\delta_j\},\tau \left|
		\begin{array}{c}
			\tau \alpha_i \tau^{-1} = \phi(\alpha_i),
			\tau \beta_i \tau^{-1} = \phi(\beta_i),\\
			\tau \delta_j \tau^{-1} = w_j \delta_{k_j} w_j^{-1},
			\Pi_{i=1}^g [\alpha_i,\beta_i]=\Pi_{j=1}^n \delta_j
		\end{array}\right.
		\right\rangle,
\end{align*}
where $w_j$ are words in the $\alpha_i$s, $\beta_i$s, and $\delta_j$s.

We start with the metabelian representation $\rho_0:\Gamma \rightarrow
\PSL(2,\mathbb{R})$ with
\begin{equation*}
	\rho_0 (\gamma_i) = 
		\begin{bmatrix}
			1 & a_i= \mu_u(\gamma_i) \\
			0 & 1
		\end{bmatrix},
\end{equation*}
where $a_i$ is the signed length of $\gamma_i$ in $\mathcal{F}^u$. Note that
$a_i = 0$ for $2g < i \leq n$. We also set
\begin{equation*}
	\rho_0 (\tau) = 
		\begin{bmatrix}
			\sqrt{\lambda} & 0\\
			0 & \sqrt{\lambda}^{-1}
		\end{bmatrix},
\end{equation*}
where $\tau$ is the generator in the $S^1$ direction of $M_\phi$, and
$\lambda$ is the pseudo-Anosov dilatation factor of $\phi$.
There is a singular Sol structure on $M_\phi$ coming from the pseudo-Anosov
action on
$\mathcal{F}^u$ and $\mathcal{F}^s$, where $\mathcal{F}^u$ and $\mathcal{F}^s$
provide a singular Euclidean structure on the fibers of $M_\phi$. Recall from 
Section \ref{sec:sol} that Sol contains embedded hyperbolic planes as ``vertical''
planes. In the singular Sol structure on $M_\phi$, these can be seen as products of
a leaf of $\mathcal{F}^s$ with the $S^1$ direction. The metabelian representation
$\rho_0$ is a projection of the singular Sol structure along the leaves of $\mathcal{F}^u$
onto one of these hyperbolic
planes inside of Sol. Such a projection yields a \textit{transversely hyperbolic
foliation} -- locally, $M_\phi$ can be viewed as an open subset of
$\mathbb{H}^2 \times \mathbb{R}$, and the pseudometric is given by the metric on
the $\mathbb{H}^2$ factor and ignoring the second factor.

%We 
%can think of the metabelian representation as a projection of the singular Sol
%structure onto a leaf of $\mathcal{F}^s$, which lies inside of Sol as a
%hyperbolic plane. Such a projection yields a \textit{transversely hyperbolic
%foliation} -- locally, $M_\phi$ can be viewed as an open subset of
%$\mathbb{H}^2 \times \mathbb{R}$, and the pseudometric is given by the metric on
%the $\mathbb{H}^2$ factor and ignoring the second factor.

\section{Smoothness of the representation variety}\label{sec:smoothness}

The goal is to deform $\rho_0$ to a representation into
$\PSL(2,\mathbb{C})$, and to realize the representation as the holonomy
representation of a $(\mathbb{H}^3,\PSL(2,\mathbb{C}))$-structure on
$N$. We consider $\rho_0 \in R(\pi_1(N_\phi),\PSL(2,\mathbb{R}))$ as
the metabelian representation from the previous section.
We begin by computing the dimension of the space of classes of twisted cocycles
 $z \in H^1(\pi_1(N_\phi),
\mathfrak{sl}(2,\mathbb{C})_{\Ad{\rho_0}})$.
%As in \cite{heusener01}, $z$ is determined by its values
%on $\gamma_1, \dots, \gamma_{2g+n},$ and $\tau$, which we denote by
%\begin{equation*}
%	z(\gamma_i) = 
%	\begin{bmatrix}
%		y_i & x_i \\
%		z_i & -y_i
%	\end{bmatrix}
%\end{equation*}
%and,
%\begin{equation*}
%	z(\tau) = 
%	\begin{bmatrix}
%		y_0 & x_0 \\
%		z_0 & -y_0
%	\end{bmatrix}.
%\end{equation*}
%Using the coboundary condition from Equation \eqref{eqn:coboundary}, we can
%compute the set of coboundaries $B^1(\pi_1(N_\phi),\mathfrak{sl}
%(2,\mathbb{C})_{\Ad{\rho_0}})$ as the set of cocycle $z'$ satisfying,
%\begin{align*}
%	z'(\gamma_i) &= \begin{bmatrix} -a_i z & 2a_iy+a_i^2 z\\
%			0 & a_i z\end{bmatrix}\\
%	z'(\tau) &= \begin{bmatrix} 0 & x-\lambda x\\
%				z - \lambda^{-1} z & 0\end{bmatrix},
%\end{align*}
%where $x,y,z \in \mathbb{C}$ parametrize $B^1(\pi_1(N_\phi),\mathfrak{sl}
%(2,\mathbb{C})_{\Ad{\rho_0}})$. In particular, adding the appropriate cocycle
%$z'$ to $z$, we can set $x_0 = z_0 = 0$. To simplify the calculation somewhat,
%we will assume that $z(\tau)$ has this form
%\begin{equation*}
%	z(\tau) = 
%	\begin{bmatrix}
%		y_0 & 0 \\
%		0 & -y_0
%	\end{bmatrix}.
%\end{equation*}

\begin{thm}\label{thm:representations}
	Let $\phi$ be pseudo-Anosov with stable and unstable foliations which are
	orientable. Suppose also that
	$\phi^*: H^1(S) \rightarrow H^1(S)$ does not have 1 as an
	eigenvalue. Then $\dim H^1(\Gamma, \mathfrak{sl}
	(2,\mathbb{C})_{\Ad{\rho_0}}) = k$ where $k$ is the number of components of
	the boundary of $N_\phi$.
\end{thm}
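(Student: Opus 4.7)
My plan is to compute $H^1(\Gamma, V)$ with $V = \mathfrak{sl}(2,\mathbb{C})_{\Ad \rho_0}$ using the Wang exact sequence for the semidirect product decomposition $1 \to \pi \to \Gamma \to \langle \tau \rangle \to 1$, where $\pi = \pi_1(S\setminus\sigma)$, combined with the filtration of $V$ by $\rho_0$-invariant subspaces coming from the upper-triangular form of the metabelian representation.

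First I would reduce the problem to $\tau$-invariants in $H^1(\pi, V)$. Since $S\setminus\sigma$ is a punctured surface, $\pi$ is free of rank $2g+n-1$ and so $H^i(\pi, V) = 0$ for $i \geq 2$. The Wang sequence then becomes
\begin{equation*}
	0 \to H^0(\pi, V)_\tau \to H^1(\Gamma, V) \to H^1(\pi, V)^\tau \to 0.
\end{equation*}
Because $\rho_0(\pi)$ sits in the unipotent upper triangular subgroup with $\mu_u \neq 0$, the invariants $H^0(\pi, V)$ equal $\mathbb{C} E$, and $\tau$ acts there by $\lambda \neq 1$, so the coinvariants vanish. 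Hence $\dim H^1(\Gamma, V) = \dim H^1(\pi, V)^\tau$.

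The next step is to exploit the $\Gamma$-invariant filtration $0 \subset \mathbb{C} E \subset \mathfrak{b} \subset V$, whose graded pieces are the one-dimensional $\Gamma$-modules $\mathbb{C}_\lambda$, $\mathbb{C}_1$, $\mathbb{C}_{\lambda^{-1}}$ (with $\pi$ acting trivially and $\tau$ acting by the subscripted weight). For each weight $\mu$, one has $H^1(\pi, \mathbb{C}_\mu) \cong H^1(S\setminus\sigma, \mathbb{C})$ and the $\tau$-action is given by $\mu\cdot(\phi^{-1})^*$, so the $\tau$-fixed subspace is the $\mu$-eigenspace of $\phi^*$ on $H^1(S\setminus\sigma, \mathbb{C})$. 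Using Lemma~\ref{lem:eigenvalues}, the hypothesis that $\phi^*|_{H^1(\bar S)}$ has no eigenvalue $1$, and the block-upper-triangular form of $\phi^*$ on $H^1(S\setminus\sigma)$ (with lower block the permutation on $\{\delta_j^*\}$ quotiented by $\sum \delta_j^* = 0$), I would read off dimensions $1$, $k-1$, and $1$ for the $\lambda$-, $1$-, and $\lambda^{-1}$-eigenspaces, spanned by $\mu_u$, one invariant per $\phi$-orbit of $\sigma$ (minus one for the relation), and $\mu_s$, respectively. The naive sum is $k+1$.

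The final piece, and the main technical hurdle, is to analyze the long exact sequences arising from the two short exact sequences of $\Gamma$-modules $0 \to \mathbb{C}_\lambda \to \mathfrak{b} \to \mathbb{C}_1 \to 0$ and $0 \to \mathfrak{b} \to V \to \mathbb{C}_{\lambda^{-1}} \to 0$, and to track the $\tau$-fixed parts through the connecting maps. The extension classes are essentially represented by $\mu_u$ itself: lifting a $\pi$-invariant generator of a quotient piece and applying $\Ad \rho_0(\gamma) - 1$ produces a cocycle proportional to $\mu_u$, which is nonzero in the target $H^1$. I expect that carefully tracing the $\tau$-invariant part of the resulting long exact sequences will show a nonzero connecting homomorphism that couples the $\mu_u$ contribution and the $\mu_s$ contribution through the Killing-form structure of $\mathfrak{sl}_2$, so that together they yield only one independent class rather than two. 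Combining this with the $k-1$ classes from the $1$-weight piece produces the claimed dimension $\dim H^1(\Gamma, V) = k$, matching the number of boundary tori of $N_\phi$. The delicate point will be verifying that the relevant connecting map is indeed nonzero under the stated hypotheses; an alternative is to compare with the peripheral restriction $H^1(\Gamma, V) \to H^1(\partial N_\phi, V)$ and combine the ``half lives, half dies'' lower bound with the upper bound $k+1$ from the filtration.
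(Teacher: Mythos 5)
Your reduction via the Wang sequence to $\dim H^1(\pi,V)^\tau$ is correct ($H^0(\pi,V)=\mathbb{C}E$ because some $a_i\ne 0$, and $\tau$ acts there by $\lambda\ne 1$, so the coinvariants vanish), and the filtration $0\subset\mathbb{C}E\subset\mathfrak{b}\subset V$ is the right one. This is a genuinely different route from the paper, which sets up an explicit Fox–calculus matrix $R$ from the relations $\tau\gamma_i\tau^{-1}=\phi(\gamma_i)$, counts its kernel, subtracts a coboundary, and then imposes the surface relation $\prod[\alpha_i,\beta_i]=\prod\delta_j$ at the very end. However, the heart of your argument is left open, and the diagnosis you sketch for why the count drops is not the right one.

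The issue is that the associated graded of $H^1(\pi,V)$ is \emph{not} $\bigoplus_\mu H^1(\pi,\mathbb{C}_\mu)$, so the numbers $1,\,k-1,\,1$ you read off are not the $\tau$-invariant dimensions of the true graded pieces, and ``$k+1$'' is not an established upper bound. You yourself observe that the connecting maps $H^0(\pi,\mathbb{C}_1)\to H^1(\pi,\mathbb{C}_\lambda)$ and $H^0(\pi,\mathbb{C}_{\lambda^{-1}})\to H^1(\pi,\mathfrak{b})$ send the generators $\bar H,\bar F$ to $\gamma_i\mapsto(\Ad{\rho_0(\gamma_i)}-1)H=-2a_iE$ and $\gamma_i\mapsto a_iH-a_i^2E$, both nonzero and proportional to $\mu_u$; these are injective, so the actual graded pieces of $H^1(\pi,V)$ are $H^1(\pi,\mathbb{C}_{\lambda^{-1}})$, $H^1(\pi,\mathbb{C}_1)/\mathbb{C}\mu_u$, and $H^1(\pi,\mathbb{C}_\lambda)/\mathbb{C}\mu_u$. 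Since $\lambda$ is a \emph{simple} eigenvalue of $\phi^*$ (Lemma~\ref{lem:eigenvalues}) with eigenvector $\mu_u$, the quotient $H^1(\pi,\mathbb{C}_\lambda)/\mathbb{C}\mu_u$ has no $\lambda$-eigenvector at all, so its $\tau$-invariants are $0$, not $1$; the other two pieces retain $\tau$-invariant dimensions $1$ and $k-1$ (since $\mu_u$ is a $\lambda$-eigenvector, not a $1$-eigenvector). Thus $\dim H^1(\pi,V)^\tau\le 1+(k-1)+0=k$: the $\mu_u$ direction is killed outright by the extension, not merged with $\mu_s$ via any Killing-form pairing. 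This also means your fallback strategy does not close the gap: ``half lives, half dies'' gives a lower bound of $k$ and your naive filtration count gives $k+1$, which only pins the dimension to $\{k,k+1\}$. With the corrected upper bound $k$ and the duality lower bound $k$ (valid because $\rho_0(l_i)$ is non-parabolic, giving $\dim H^1(\pi_1T_i,\mathfrak{sl}_2)=2$), the argument closes.
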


\begin{proof}
	Let $z \in Z^1(\pi_1(N_\phi),\mathfrak{sl}(2,\mathbb{C})_{\Ad{\rho_0}})$.
	Then $z$ is determined by its values on $\gamma_1,\dots,\gamma_{2g+n}$,
	and $\tau$, subject to the cocycle condition $\eqref{eqn:cocycle}$ imposed by
	the relations in $\Gamma$. These can be computed via the Fox calculus
	\cite[Chapter 3]{lubotzky85}. Differentiating the relations
	\begin{align*}
		\tau \gamma_i \tau^{-1} &= \phi(\gamma_i),
	\end{align*}
	yields
	\begin{align}
		\frac{\partial [ \phi(\gamma_i) \tau \gamma_i^{-1} \tau^{-1}]}{\partial \gamma_i}
			&= \frac{\partial\phi(\gamma_i)}{\partial \gamma_i}
				- \phi(\gamma_i)\tau\gamma_i^{-1}
				= \frac{\partial\phi(\gamma_i)}{\partial \gamma_i} - \tau\notag\\
		\frac{\partial [ \phi(\gamma_i) \tau \gamma_i^{-1} \tau^{-1}]}{\partial \gamma_j}
			&= \frac{\partial\phi(\gamma_i)}{\partial \gamma_j}, i \neq j\notag\\
		\frac{\partial [ \phi(\gamma_i) \tau \gamma_i^{-1} \tau^{-1}]}{\partial \tau}
			&= \phi(\gamma_i) - \phi(\gamma_i)\tau\gamma_i^{-1}\tau^{-1}
				= \phi(\gamma_i) - 1.\label{eqn:derivatives}
	\end{align}
	
	Choosing the basis,
	\begin{equation*}
		e_1 = \begin{bmatrix} 0 & 1 \\ 0 & 0\end{bmatrix},
		e_2 = \begin{bmatrix} 1 & 0\\ 0 & -1\end{bmatrix},
		e_3 = \begin{bmatrix} 0 & 0 \\ 1 & 0 \end{bmatrix}
	\end{equation*}
	for $\mathfrak{sl}(2,\mathbb{C})$, the values $z(\gamma_i)$ can be
	expressed in coordinates $(x_i,y_i,z_i)$, where $z(\gamma_i)$ is the matrix
	\begin{equation*}
		z(\gamma_i) = \begin{bmatrix} y_i & x_i \\z_i & -y_i\end{bmatrix},
	\end{equation*}
	and we similarly let $z(\tau)$ be given in the coordinates $(x_0,y_0,z_0)$. We note
	that by using the coboundary condition from Equation \eqref{eqn:coboundary},
	we can compute the set of coboundaries $B^1(\pi_1(N_\phi),\mathfrak{sl}
	(2,\mathbb{C})_{\Ad{\rho_0}})$ as the set of cocycle $z'$ satisfying,
	\begin{align*}
		z'(\gamma_i) &= \begin{bmatrix} -a_i z & 2a_iy+a_i^2 z\\
			0 & a_i z\end{bmatrix}\\
		z'(\tau) &= \begin{bmatrix} 0 & x-\lambda x\\
				z - \lambda^{-1} z & 0\end{bmatrix},
	\end{align*}
	where $x,y,z \in \mathbb{C}$ parametrize $B^1(\pi_1(N_\phi),\mathfrak{sl}
	(2,\mathbb{C})_{\Ad{\rho_0}})$. In particular, adding the appropriate coboundary
	$z'$ to $z$, we can set $x_0 = z_0 = 0$. To simplify the calculation somewhat,
	we will assume that $z(\tau)$ has this form
	\begin{equation*}
		z(\tau) = 
		\begin{bmatrix}
			y_0 & 0 \\
			0 & -y_0
		\end{bmatrix}.
	\end{equation*}
	
	We first note that if $W$ is a word in the $\gamma_i$, then
	$\rho(W) = \begin{bmatrix} 1 & A \\ 0 & 1\end{bmatrix}$ for some
	real number $A$. Then, under the chosen basis for $\mathfrak{sl}(2,
	\mathbb{C})$, $\Ad{\rho_0(W)}$ acts by
	\begin{equation*}
		\begin{bmatrix} 1 & -2A & -A^2\\ 0 & 1 & A \\ 0 & 0 & 1\end{bmatrix}.
	\end{equation*}
	We obtain one term from $\frac{\partial\phi(\gamma_i)}{\partial \gamma_j}$ for
	each instance of $\gamma_j$ in $\phi(\gamma_i)$ (with a negative sign if
	$\gamma_j^{-1}$ appears), and each term is a word in the $\gamma$'s.
	
	Similarly, we can compute that $\Ad{\rho_0(\tau)}$ acts on
	$\mathfrak{sl}(2,\mathbb{C})$ via
	\begin{equation*}
		\begin{bmatrix} \lambda & 0 & 0\\0 & 1 & 0\\ 0 & 0 & \lambda^{-1}
			\end{bmatrix}.
	\end{equation*}
	
	We see that $Z^1(\pi_1(N_\phi),\mathfrak{sl}(2,\mathbb{C})_{\Ad{\rho_0}})$
	is determined, as in \cite{heusener01}, by a subset of vectors
	$\vec{v}=(x_1,\dots,x_{2g+n},y_0,y_1,\dots,y_{2g+n},z_1,\dots,z_{2g+n})^T$
	such that $R\vec{v}=0$, where $R$ decomposes into blocks
	\begin{equation*}
		R = 
			\begin{bmatrix}
				\begin{bmatrix} & & \\ &\mathring\phi^*-\lambda I& \\ & & \end{bmatrix}
				& \begin{array}{c}
					-2 \lambda a_1 \\ \vdots \\-2 \lambda a_{2g+n}\end{array}
					& \begin{bmatrix} & \phantom{\mathring\phi^*-I} & \\ & K & \\ & & \end{bmatrix}
					& \begin{bmatrix} & \phantom{\mathring\phi^*-\lambda^{-1} I} & \\ & C  & \\
					& & \end{bmatrix}\\
				\begin{bmatrix} & \phantom{\mathring\phi^*-\lambda I} & \\ & 0 & \\ & &
					\end{bmatrix} &
					\begin{array}{c}
					0 \\ \vdots \\0 \end{array}
					& \begin{bmatrix} & & \\ & \mathring\phi^*-I & \\ & & \end{bmatrix}
					& \begin{bmatrix} & \phantom{\mathring\phi^*-\lambda^{-1} I} & \\ & D & \\
					& & \end{bmatrix}\\
				\begin{bmatrix} & \phantom{\mathring\phi^*-\lambda I} & \\ & 0 & \\ & &
					\end{bmatrix} & \begin{array}{c}
					0 \\ \vdots \\0 \end{array}
					& \begin{bmatrix} & \phantom{\mathring\phi^*-I} & \\ & 0 & \\ & & \end{bmatrix} &
					\begin{bmatrix} & & \\ & \mathring\phi^*-\lambda^{-1} I & \\ & & \end{bmatrix}
			\end{bmatrix}.
	\end{equation*}
	Here, $\mathring\phi^*:H^1(S\setminus \sigma)\rightarrow H^1(S\setminus \sigma)$
	is the $(2g+n) \times (2g+n)$ matrix describing the cohomology action induced
	by $\phi$, which can be written as a block matrix
	\begin{equation*}
		\begin{bmatrix}[\phi^*] & [*]\\ 0 & [P]\end{bmatrix}
	\end{equation*}
	where $P=(p_{ij})$ is a permutation matrix denoting the permutation of the
	singularities in $\sigma$ by $\phi$. In particular, if
	$\tau\delta_j\tau^{-1}=w_j \delta_{k_j} w_j^{-1}$, then $p_{jk_j}=1$.
	By Lemma \ref{lem:eigenvalues}, $\phi^*-\lambda I$ and $\phi^*-\lambda^{-1} I$ have
	1 dimensional kernel. Furthermore, since 1 is not an eigenvalue of $\phi^*$,
	$\mathring\phi^*-I$ has kernel whose dimension is
	equal to the number of disjoint cycles of the permutation of the punctures.
	But a cycle in the permutation corresponds to a single boundary
	component of $N_\phi$. Hence, the kernel of $R$ has dimension at most
	$2+k+1$, where the additional $1$ comes from the $(2g+n)+1$--th column
	of $R$ and
	\begin{equation*}
		k=\text{\# of components of }\Sigma=\text{\# of components of }\partial N.
	\end{equation*}
	Now consider the upper left portion of the matrix $R$, which we will call
	$U$:
	\begin{equation*}
		U = \left(\begin{array}{ccc}
			\begin{bmatrix} & & \\ & \mathring\phi^*-\lambda I & \\ & &\end{bmatrix}
				& \begin{array}{c}
					-2 \lambda a_1 \\ \vdots \\-2 \lambda a_{2g+n}\end{array}
				& \begin{bmatrix} & \phantom{\mathring\phi^*-I} &  \\ & K & \\
				& & \end{bmatrix}\\
			\begin{bmatrix} & \phantom{\mathring\phi^*-\lambda I} & \\ & 0 & \\
				& & \end{bmatrix}
				& \begin{array}{c}
					0 \\ \vdots \\0 \end{array}
				& \begin{bmatrix} & & \\ & \mathring\phi^*-I & \\ & & \end{bmatrix}
		\end{array}\right).
	\end{equation*}
%	Since the $\alpha_i$ and $\beta_i$ for $1 \leq i \leq g$ do not intersect
%	$\Pi_{j=1}^n \delta_j = \partial D$, we have that
%	$K_{ij} = 0$ for $2g < j \leq 2g+n$. Because any vector $\vec{y}$ in the
%	kernel of $\hat\phi^*-I$ comes from the permutation block $P$ (in other words,
%	the $i$th coordinates $y_i$ zero for $1 \leq i \leq 2g$), then we have
%	that $K\vec{y} = 0$. Hence, if $\text{null}(U) > 2+k$, then we must be able
%	to solve
%	\begin{equation*}
%		(\hat\phi^*-\lambda I)\begin{pmatrix}x_1\\x_2\\ \vdots \\
%			x_{2g+n}\end{pmatrix} = \left(\begin{array}{c}
%			2 \lambda a_1 \\ \vdots \\2 \lambda a_{2g+n}\end{array} \right).
%	\end{equation*}
	If $\text{null}(R) > 2+k$, then we must have that $\text{null}(U) > k+1$.
	
	Since $\lambda$ is a simple eigenvalue of $\phi^*$ and
	$(a_1,\dots,a_{2g})^T$ is a corresponding eigenvector for $\lambda$,
	$(a_1,\dots,a_{2g})^T$ is not in the image of $\phi^* - \lambda I$. Hence,
	for any $\vec{y}$ in the kernel of $\phi^* - I$, there is a unique $y_0$
	such that $K\vec{y}-y_0(a_1,\dots,a_{2g})^T$ is in the image of
	$\phi^*-\lambda I$. Therefore, $\text{null}(U) = k+1$
	
	Hence $\text{null}(R) = 2+k$. However, the solution arising from
	the kernel of $\mathring\phi^*-\lambda I$ is the eigenvector
	\begin{equation*}
		\vec{v}=(a_1,\dots,a_{2g+n},0,\dots,0,0,\dots,0)^T
	\end{equation*}
	which	is a coboundary. So we have that $\dim H^1(\Gamma, \mathfrak{sl}
	(2,\mathbb{R})_{\Ad{\rho_0}}) \leq k+1$. Finally, there is one further
	redundancy since 
	\begin{equation*}
		\Pi_{i=1}^g [\alpha_i,\beta_i]=\Pi_{j=1}^n \delta_j.
	\end{equation*}
	From the $\mathring\phi^*-I$ block, we can see that $y_{2g+1},\dots,y_{2g+n}$
	can be freely chosen as long as $y_{2g+j}=y_{2g+k_j}$ whenever
	$\tau \delta_j \tau^{-1} = w_j \delta_{k_j} w_j^{-1}$. Hence, the upper-left
	(=lower-right) entry of $z(\Pi_{j=1}^n \delta_j)$ can be freely chosen to
	be any quantity
	\begin{equation}
		y_{2g+1} + y_{2g+2} + \dots y_{2g+n}.\label{eqn:y-sum}
	\end{equation}
	The relation $\Pi_{i=1}^g [\alpha_i,\beta_i]=\Pi_{j=1}^n \delta_j$
	forces the sum in Equation \eqref{eqn:y-sum} to be a fixed quantity coming from
	the upper-left entry of $\Pi_{i=1}^g [\alpha_i,\beta_i]$, which has
	no dependence on $y_{2g+j}$, for $1\leq j \leq n$.
	
	Therefore, the relation drops the dimension of the space of cocycles by 1,
	and  $\dim H^1(\Gamma, \mathfrak{sl}(2,\mathbb{C})_{\Ad{\rho_0}})
	= k$.
\end{proof}

In order to show that $R(\pi_1(N_\phi),\PSL(2,\mathbb{C}))$ is smooth
at $\rho_0$, following \cite{heusener01, heusener05}, we define
a \textit{formal deformation} of $\rho: \pi_1(M) \rightarrow \PSL(2,\mathbb{C})$
for a fixed 3-manifold $M$
to be a homomorphism $\rho_\infty:\pi_1(M) \rightarrow \PSL(2,\mathbb{C}[[t]])$ of
the form
\begin{equation*}
	\rho_\infty(\gamma) = \pm \exp (\sum_{i=1}^\infty t^i u_i(\gamma))\rho(\gamma)
\end{equation*}
where $u_i : \pi_1(M) \rightarrow \mathfrak{sl}(2,\mathbb{C})$ are elements
of $C^1(\pi_1(M),\mathfrak{sl}(2,\mathbb{C})_{\Ad{\rho}})$, and evaluating
$\rho_\infty$ at $t=0$ yields $\rho$. If $\rho_\infty$ is a homomorphism 
modulo $t^{j+1}$, we say that $\rho_\infty$ is \textit{a formal deformation up to
order j}. A cocycle $u_1 \in Z^1(\pi_1(M),\mathfrak{sl}(2,\mathbb{C})_{\Ad\rho})$
is \textit{formally integrable} if there is a formal deformation of $\rho$ with leading
term $u_1$. In \cite{heusener01}, it is shown that given a deformation of order $j$,
there is an obstruction class $\zeta_{j+1} \in
H^2(\pi_1(M),\mathfrak{sl}(2,\mathbb{C})_{\Ad\rho})$ to extending to a deformation
of order $j+1$:

\begin{prop}[\cite{heusener01}, Proposition 3.1]
	Let $\rho \in R(\pi_1(M),\PSL(2,\mathbb{C}))$ and $u_i \in C^1(\pi_1(M),
	\mathfrak{sl}(2,\mathbb{C})_{\Ad\rho})$, $ 1 \leq i \leq j$ be given.
	If
	\begin{equation*}
		\rho_j(\gamma) = \exp(\sum_{i=1}^j t^i u_i(\gamma))\rho(\gamma)
	\end{equation*}
	is a homomorphism into $\PSL(2,\mathbb{C}[[t]])$ modulo
	$t^{j+1}$, then there exists an obstruction class $\zeta_{j+1}^{(u_1,\dots,u_k)} \in
	H^2(\pi_1(M),\mathfrak{sl}(2,\mathbb{C})_{\Ad\rho})$ such that:
	\begin{enumerate}
		\item There is a cochain $u_{j+1}:\pi_1(M) \rightarrow
			\mathfrak{sl}(2,\mathbb{C})$ such that
			\begin{equation*}
				\rho_{j+1}(\gamma) = \exp(\sum_{i=1}^{j+1} t^i u_i(\gamma)) 
					\rho(\gamma)
			\end{equation*}
			is a homomorphism modulo $t^{j+2}$ if and only if $\zeta_{j+1}=0$.
		\item The obstruction $\zeta_{j+1}$ is natural, i.e. if $f$ is a homomorphism
			then $f^* \rho_j := \rho_j \circ f$ is also a homomorphism modulo $t^{j+1}$
			and $f^*(\zeta_{j+1}^{(u_1,\dots,u_j)}) = \zeta_{j+1}^{(f^* u_1,\dots,f^* u_j)}$.
	\end{enumerate}
\end{prop}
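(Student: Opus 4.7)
The plan is to treat this as a standard obstruction-theory computation in group cohomology, of the kind used for deforming representations of discrete groups. The idea is to measure the failure of $\rho_j$ to be a homomorphism modulo $t^{j+2}$ by a $2$-cochain, verify that this cochain is a cocycle, and check that adjusting $u_{j+1}$ changes it by a coboundary, so that its class $\zeta_{j+1} \in H^2(\pi_1(M),\mathfrak{sl}(2,\mathbb{C})_{\Ad\rho})$ is a well-defined obstruction.

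First I would define the obstruction cochain explicitly. Since $\rho_j$ is a homomorphism modulo $t^{j+1}$, the expression
\begin{equation*}
	\rho_j(\gamma_1\gamma_2)\,\rho_j(\gamma_2)^{-1}\rho_j(\gamma_1)^{-1}
\end{equation*}
lies in $1 + t^{j+1}\,\mathfrak{sl}(2,\mathbb{C}[[t]])$, so it can be written as $\exp\bigl(t^{j+1} c(\gamma_1,\gamma_2) + O(t^{j+2})\bigr)$ for a unique $c(\gamma_1,\gamma_2) \in \mathfrak{sl}(2,\mathbb{C})$. This defines a $2$-cochain $c \in C^2(\pi_1(M),\mathfrak{sl}(2,\mathbb{C})_{\Ad\rho})$. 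The next step is to verify the cocycle identity $\delta c = 0$. For this I would expand the associativity relation $\rho_j((\gamma_1\gamma_2)\gamma_3) = \rho_j(\gamma_1(\gamma_2\gamma_3))$, rearrange using the defining identity for $c$, and read off the $t^{j+1}$ coefficient. Because the lower-order errors vanish modulo $t^{j+1}$, the computation collapses to the standard group-cohomology coboundary formula
\begin{equation*}
	(\delta c)(\gamma_1,\gamma_2,\gamma_3) = \Ad{\rho(\gamma_1)}\,c(\gamma_2,\gamma_3) - c(\gamma_1\gamma_2,\gamma_3) + c(\gamma_1,\gamma_2\gamma_3) - c(\gamma_1,\gamma_2) = 0.
\end{equation*}

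Next I would show that replacing $\rho_j$ by $\rho_{j+1}(\gamma) = \exp(t^{j+1} u_{j+1}(\gamma))\rho_j(\gamma)$ for a cochain $u_{j+1}$ modifies $c$ by $\delta u_{j+1}$. Again expanding modulo $t^{j+2}$, the Baker--Campbell--Hausdorff correction is of order $t^{2(j+1)}$ and so contributes nothing, and a direct calculation shows the new $2$-cochain is $c + \delta u_{j+1}$, where $\delta$ is the group-cohomology differential in degree $1$. Thus the class $[c] \in H^2(\pi_1(M),\mathfrak{sl}(2,\mathbb{C})_{\Ad\rho})$ depends only on $\rho_j$, and we set $\zeta_{j+1}^{(u_1,\dots,u_j)} := [c]$. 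By construction, an extension $\rho_{j+1}$ exists if and only if we can find $u_{j+1}$ with $c = -\delta u_{j+1}$, i.e. if and only if $\zeta_{j+1} = 0$.

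Finally, naturality under a group homomorphism $f$ is immediate from the formula: if one replaces $\gamma$ by $f(\gamma)$ throughout, the $2$-cochain measuring the failure of $\rho_j \circ f$ is simply $f^*c$, whose cohomology class is $f^*[c] = f^*\zeta_{j+1}$.

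I expect the main obstacle to be the bookkeeping in the cocycle verification rather than any conceptual issue: one must track the $\exp(\cdot)$ expansions carefully modulo $t^{j+2}$ and confirm that all higher-order commutators disappear, so that the group-cohomology differential on $c$ is genuinely the associator-derived expression and nothing more. Once this is in place, the existence of $u_{j+1}$ with the prescribed property reduces to the standard fact that a $2$-cocycle is a coboundary if and only if its cohomology class is trivial, which gives both claims of the proposition.
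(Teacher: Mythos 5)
The paper does not prove this proposition; it is quoted as Proposition~3.1 of Heusener, Porti, and Su\'arez \cite{heusener01} and simply invoked as a black box. So there is no internal proof to compare against, and the relevant comparison is to the cited source. Your plan is the standard obstruction-theory construction, and it is essentially the argument Heusener--Porti--Su\'arez give: define the $2$-cochain measuring the failure of $\rho_j$ to be a homomorphism at order $t^{j+1}$, use associativity $\rho_j((\gamma_1\gamma_2)\gamma_3)=\rho_j(\gamma_1(\gamma_2\gamma_3))$ to verify the group-cohomology cocycle identity, show that adjoining $\exp(t^{j+1}u_{j+1})$ shifts this cochain by $\delta u_{j+1}$ (the BCH corrections being order $t^{2(j+1)}$), and read off both the extendability criterion and naturality. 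The only point of attention when you fill in the bookkeeping is orientation: you chose $\rho_j(\gamma_1\gamma_2)\rho_j(\gamma_2)^{-1}\rho_j(\gamma_1)^{-1}$, whereas the cited reference works with $\rho_j(\gamma_1)\rho_j(\gamma_2)\rho_j(\gamma_1\gamma_2)^{-1}$; these are inverses so the resulting cocycles differ by an overall sign, which must be tracked consistently so that the vanishing criterion becomes $c=-\delta u_{j+1}$ versus $c=\delta u_{j+1}$, but this does not affect the conclusion. Apart from that sign convention, your outline is correct and matches the cited proof.
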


We will denote by $i:\partial M \rightarrow M$ be the inclusion map.

\begin{lem}
	Let $M$ be a 3-manifold with torus boundary components
	$\partial M = \sqcup_{i=1}^k T_i$.
	Let $\rho:\pi_1(M) \rightarrow \PSL(2,\mathbb{C})$ be a non-abelian
	representation such that
	$\rho(\pi_1(T_i))$ contains a non-parabolic element for each component
	$T_i$ of $\partial M$. If $\dim H^1(
	\pi_1(M),\mathfrak{sl}(2,\mathbb{C})_{\Ad{\rho}}) = k$ where $k$ is the number
	of components of $\partial M$, then $i^*:H^2(M,\mathfrak{sl}(2,{\mathbb{C})_{
	\Ad\rho}}) \rightarrow H^2(\partial M,\mathfrak{sl}(2,{\mathbb{C})_{\Ad\rho}})$
	is injective.
\end{lem}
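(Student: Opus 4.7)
The plan is to compute the dimensions of the twisted cohomology groups $H^*(M;V)$, $H^*(\partial M;V)$, and $H^*(M,\partial M;V)$ in every degree, where $V=\mathfrak{sl}(2,\mathbb{C})_{\Ad{\rho}}$, and then to chase the long exact sequence of the pair $(M,\partial M)$. The Killing form endows $V$ with the structure of a self-dual local coefficient system, so Poincar\'e--Lefschetz duality applies in the form $H^i(M;V)\cong H^{3-i}(M,\partial M;V)$.

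For the boundary, fix a torus $T_i$. Because $\pi_1(T_i)$ is abelian and $\rho(\pi_1(T_i))$ contains a non-parabolic element, the whole image lies in the Cartan subgroup of $\PSL(2,\mathbb{C})$ centralizing that element. Hence the $\Ad$-action of $\pi_1(T_i)$ on $V$ fixes exactly the Cartan subalgebra $\mathfrak{h}$ (one-dimensional) and acts on the two root spaces by nontrivial characters of $\mathbb{Z}^2=\pi_1(T_i)$. A K\"unneth computation shows that the root-space contributions to $H^*(T_i;V)$ vanish, while $\mathfrak{h}$ contributes the untwisted cohomology of the torus, giving $\dim H^j(T_i;V)=1,2,1$ in degrees $j=0,1,2$. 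Summing over the $k$ boundary tori yields $\dim H^j(\partial M;V)=k,\,2k,\,k$ in degrees $0,1,2$.

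For $M$ itself, the non-abelianness of $\rho$ makes its $\Ad$-centralizer in $\mathfrak{sl}(2,\mathbb{C})$ trivial, so $H^0(M;V)=0$, and $H^3(M;V)=0$ since $M$ is a compact $3$-manifold with nonempty boundary. Combining the hypothesis $\dim H^1(M;V)=k$ with the vanishing of the Euler characteristic $\chi(M;V)=3\chi(M)=0$ forces $\dim H^2(M;V)=k$. Poincar\'e--Lefschetz duality then gives $\dim H^1(M,\partial M;V)=\dim H^2(M,\partial M;V)=k$.

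Finally, chase the long exact sequence of the pair (coefficients in $V$ suppressed):
\begin{equation*}
\cdots \to H^0(M) \to H^0(\partial M) \to H^1(M,\partial M) \to H^1(M) \to H^1(\partial M) \to H^2(M,\partial M) \to H^2(M) \xrightarrow{i^*} H^2(\partial M).
\end{equation*}
Since $H^0(M)=0$, the map $H^0(\partial M)\to H^1(M,\partial M)$ is an injection between spaces of equal dimension $k$, hence an isomorphism; by exactness $H^1(M,\partial M)\to H^1(M)$ is zero, so $H^1(M)\to H^1(\partial M)$ is injective with $k$-dimensional image. The following map $H^1(\partial M)\to H^2(M,\partial M)$ therefore has $k$-dimensional kernel and $k$-dimensional image, so it is surjective. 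One more application of exactness shows that $H^2(M,\partial M)\to H^2(M)$ is zero, making $i^*$ injective. No step is particularly deep; the only substantive input is the boundary cohomology calculation, which would fail without the non-parabolicity hypothesis (the fixed subspace on each torus would jump in dimension, breaking the ambient dimension count).
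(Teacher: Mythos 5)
Your proof is correct and reaches the conclusion by a closely related but somewhat more hands-on route than the paper. The paper's argument, after recording $\dim H^1(T_i;V)=2$ (following Porti), invokes the ``half lives, half dies'' principle --- that the restriction $\alpha\colon H^1(M;V)\to H^1(\partial M;V)$ has half-dimensional image --- to conclude $\alpha$ is injective, and then uses that the connecting map $\beta\colon H^1(\partial M;V)\to H^2(M,\partial M;V)$ is Poincar\'e-dual to $\alpha$ to get surjectivity of $\beta$, hence injectivity of $i^*$. You instead compute the full set of dimensions of $H^*(M;V)$, $H^*(\partial M;V)$, and $H^*(M,\partial M;V)$ (via the K\"unneth computation on each torus, the Euler characteristic of $M$, and Poincar\'e--Lefschetz duality) and then run an explicit dimension chase through the long exact sequence of the pair. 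In effect you re-derive the consequence of the half-lives-half-dies lemma you need rather than citing it, which makes the argument more self-contained at the cost of a longer chase; both proofs rest on the same two pillars, the long exact sequence of $(M,\partial M)$ and Poincar\'e--Lefschetz duality with the self-dual coefficient system $\mathfrak{sl}(2,\mathbb{C})_{\Ad\rho}$. One small point worth making explicit in your write-up: $H^3(M;V)=0$ is most cleanly justified by observing that a compact $3$-manifold with nonempty boundary deformation retracts onto a $2$-complex.
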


\begin{proof}
	We have the cohomology exact sequence for the pair $(M,\partial M)$
	\begin{equation*}
		\begin{CD}
			H^1(M,\partial M)@>>>
				H^1(M) @>\alpha>>
				H^1(\partial M) \\
				@>\beta>>
				H^2(M,\partial M) @>>>
				H^2(M) \\
				@>i^*>>
				H^2(\partial M) @>>>
				H^3(M,\partial M)@>>>
		\end{CD}
	\end{equation*}
	where all cohomology groups are taken to be with the twisted coefficients
	$\mathfrak{sl}(2,\mathbb{C})_{\Ad\rho}$.
	A standard Poincar\'{e} duality argument \cite{heusener01,hodgson98,porti97}
	gives that $\alpha$ has half-dimensional image. For a torus $T$,
	\begin{equation*}
		\dim H^1(\pi_1(T), \mathfrak{sl}(2,\mathbb{C})) = 2,
	\end{equation*}
	as long as $\rho(\pi_1(T))$ contains a
	hyperbolic element \cite{porti97}. Hence, $\alpha$ is injective. Since $\beta$ is
	dual to $\alpha$ under Poincar\'{e} duality, then $\beta$ is surjective. This
	implies that $i^*$ is injective.
\end{proof}

We apply the above to the representation $\rho_0$ to conclude that the
representation variety is smooth at the metabelian representation $\rho_0$.

\begin{thm}\label{thm:smoothness}
	The metabelian representation $\rho_0:\pi_1(N_\phi) \rightarrow \PSL(2,\mathbb{C})$
	is a smooth point of $R(\pi_1(N_\phi),\PSL(2,\mathbb{C}))$, with local dimension
	$k+3$.
\end{thm}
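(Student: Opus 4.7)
The plan is to apply obstruction theory for formal deformations, as in Heusener--Porti, using Theorem \ref{thm:representations} and the preceding lemma as the two main inputs. It suffices to show that every first-order deformation $u_1 \in Z^1(\Gamma, \mathfrak{sl}(2,\mathbb{C})_{\Ad{\rho_0}})$ is formally integrable: then standard arguments give that $\rho_0$ is a smooth point of $R(\Gamma,\PSL(2,\mathbb{C}))$ with local dimension $\dim Z^1$. By the cited obstruction proposition, the obstruction to extending a formal deformation of order $j$ lives in $H^2(\Gamma, \mathfrak{sl}(2,\mathbb{C})_{\Ad{\rho_0}})$, so the goal becomes showing that every such class vanishes.

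First I would verify the hypotheses of the preceding lemma for $\rho_0$. The representation is non-abelian because $\rho_0(\tau)$ is diagonal and fails to commute with $\rho_0(\gamma_i)$ whenever $a_i \neq 0$, which holds for at least one $i$ since $(a_1,\dots,a_{2g})$ is a nonzero eigenvector of $\phi^*$. For each boundary torus $T_i$ of $N_\phi$, $\pi_1(T_i)$ is generated by a meridian conjugate to some $\delta_k$ (mapped to $I$ by $\rho_0$) and a longitude of the form $\tau^{m_i} w$, where $m_i$ is the length of the cycle in the permutation of $\sigma$ containing $s_k$ and $w$ is a word in the $\gamma_i$. A direct computation shows $\rho_0(\tau^{m_i} w)$ is upper triangular with eigenvalues $\lambda^{\pm m_i/2}$, hence hyperbolic. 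Combining this with $\dim H^1 = k$ from Theorem \ref{thm:representations} and applying the preceding lemma, I conclude that $i^{*}:H^2(N_\phi,\mathfrak{sl}(2,\mathbb{C})_{\Ad{\rho_0}}) \to H^2(\partial N_\phi,\mathfrak{sl}(2,\mathbb{C})_{\Ad{\rho_0}})$ is injective.

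Next I would reduce vanishing of obstructions to the boundary via naturality: $i^{*}\zeta_{j+1}$ is precisely the obstruction to extending the restricted formal deformation over $\partial N_\phi$. On each boundary torus, the restricted representation has image in an abelian subgroup of $\PSL(2,\mathbb{C})$ containing a hyperbolic element. The representation variety $R(\mathbb{Z}^2,\PSL(2,\mathbb{C}))$ is smooth at such a point, since two commuting elements, one of them hyperbolic, must lie in a common one-parameter subgroup. Therefore every cocycle on $T_i$ is formally integrable, so $i^{*}\zeta_{j+1}=0$, and injectivity of $i^{*}$ forces $\zeta_{j+1}=0$ inductively for all $j$.

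For the dimension count, $\dim T_{\rho_0} R = \dim Z^1 = \dim H^1 + \dim B^1 = k + \dim B^1$. Since $\rho_0$ is non-abelian, the $\Ad{\rho_0}$-invariants of $\mathfrak{sl}(2,\mathbb{C})$ are trivial, so $\dim B^1 = 3$, giving local dimension $k+3$. The main obstacle in the plan is the boundary-reduction step: its validity rests on the sharpness of the $H^1$-dimension bound in Theorem \ref{thm:representations}, because without the eigenvalue-$1$ hypothesis on $\phi^*$ the cohomology could jump and $i^{*}$ could fail to be injective, obstructing the entire argument.
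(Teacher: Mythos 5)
Your proposal is correct and follows essentially the same route as the paper: reduce formal integrability to vanishing of the $H^2$ obstruction, use the boundary smoothness to kill the restricted obstruction, invoke injectivity of $i^*$ from the cohomology lemma (which in turn uses $\dim H^1 = k$ from Theorem \ref{thm:representations}), and count dimensions via non-abelianness. The only cosmetic differences are that you supply direct verifications (non-abelianness of $\rho_0$, hyperbolicity of the longitude image, and a self-contained argument for smoothness of $R(\mathbb{Z}^2,\PSL(2,\mathbb{C}))$ at the boundary representation) where the paper instead cites Heusener--Porti--Su\'arez; your phrase ``common one-parameter subgroup'' should really read ``common maximal torus / Cartan subgroup,'' but the underlying argument is sound.
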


\begin{proof}
	We begin by showing that every cocyle in $Z^1(\pi_1(N_\phi),
	\mathfrak{sl}(2,\mathbb{C}))$ is integrable.
	
	Suppose we have $u_1,\dots,u_j : \pi_1(N_\phi) \rightarrow
	\mathfrak{sl}(2,\mathbb{C})$ such that 
	\begin{equation*}
		\rho_j(\gamma) = \exp(\sum_{i=1}^j t^i u_i(\gamma))\rho(\gamma)
	\end{equation*}
	is a homomorphism modulo $t^{j+1}$. We have that $\partial N_\phi =
	\sqcup_{i=1}^k T_i$ is a disjoint union of tori, and the restriction
	$\rho_j|_{\pi_1(T_i)}$ to $\pi_1(T_i)$ is also a formal deformation of order $j$.
	We have that $\rho_0 (T_i)$ contains a non-parabolic element, namely
	$\rho_0(\tau)$, or a translate. Then, the restriction of
	$\rho_0$ to $\pi_1(T_i)$ is a smooth point of the representation variety
	$R(\pi_1(T_i),\PSL(2,\mathbb{C}))$. Hence $\rho_j|_{\pi_1(T_i)}$ extends
	to a formal deformation of
	order $j+1$ by the formal implicit function theorem (see \cite{heusener01},
	Lemma 3.7). This implies that the restriction of $\zeta_{j+1}^{(u_1,\dots,u_j)}$
	to each component $H^2(T_i) < H^2(\partial N_\phi)$ vanishes.
	
	As $H^2(\partial N_\phi) = \oplus_{i=1}^k H^2(T_i)$,
	hence, $i^* \zeta_{k+1}^{(u_1,\dots,u_k)} = \zeta_{k+1}^{(i^*u_1,\dots,i^*u_k)} = 0$.
	We have shown in Theorem \ref{thm:representations} that $H^1(\pi_1(N_\phi),
	\PSL(2,\mathbb{C}))$ has dimension $k$.
	The injectivity of $i^*$ implies that $\zeta_{k+1}^{(u_1,\dots,u_k)} = 0$.
	
	Applying Proposition 3.6 from \cite{heusener01} to the formal deformation
	$\rho_\infty$ results in a convergent deformation. Hence, $\rho_0$ is a smooth
	point of the representation variety. As $\rho_0$ is non-abelian, we have
	that $\dim B^1(\pi_1(N_\phi),\PSL(2,\mathbb{C})) = 3$, so that
	$\dim R(\pi_1(N_\phi),\PSL(2,\mathbb{R})) = k+3$.
\end{proof}

\section{Singular hyperbolic structures}\label{sec:singularstructures}

In this section, we will use the smoothness result from Theorem
\ref{thm:smoothness} to find representations that are near
the Sol representation. In order to realize the representations as
geometric structures, we will need the Ehresmann--Thurston principle
\cite{thurston80}.

\begin{thm*}[Ehresmann--Thurston Principle]
	Let $X$ be a manifold upon which a Lie group $G$ acts transitively. Let $M$
	have a $(X,G)$-structure with holonomy representation $\rho:\pi_1(M)
	\rightarrow G$. For $\rho'$ sufficiently near $\rho$ in the space of
	representations  $\Hom(\pi_1(M),G)$, there exists a nearby
	$(X,G)$-structure on $M$ with holonomy representation $\rho'$.
\end{thm*}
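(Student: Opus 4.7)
The plan is to construct, for each $\rho'$ in a neighborhood of $\rho$ in $\Hom(\pi_1(M),G)$, an $(X,G)$-structure on $M$ whose developing map is $\rho'$-equivariant, by modifying the original developing map $D:\tilde M \to X$ via a carefully chosen $G$-valued gauge. My strategy is combinatorial: triangulate $M$, record the transition data of $D$ as a $G$-valued cocycle, deform this cocycle using $\rho'$ in place of $\rho$, and then glue the pieces back together smoothly.

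First, I would choose a triangulation $\mathcal{T}$ of $M$ fine enough that the closed star of each simplex lifts in $\tilde M$ to a set on which $D$ restricts to an embedding onto an open subset of $X$. Fixing lifts $\tilde\sigma \subset \tilde M$ of the simplices $\sigma$, I record the elements $g_{\sigma,\sigma'} \in G$ (words in $\rho$ applied to the deck transformations carrying one lift to its neighbor) that relate $D$ on adjacent lifts; these satisfy a cocycle condition over the $1$-skeleton of the dual complex. For $\rho'$ near $\rho$, I would define $g'_{\sigma,\sigma'}$ by the same words but with $\rho'$ replacing $\rho$; the cocycle condition then persists automatically since $\rho'$ is a homomorphism.

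Next, I would build the new developing map $D':\tilde M \to X$ inductively: fix a base simplex and set $D' = D$ there, then propagate across codimension-$1$ faces using the new transition elements. The cocycle condition guarantees that the result is well-defined on overlaps and $\rho'$-equivariant. To arrange that $D'$ is smooth and a local diffeomorphism, I would use a partition of unity subordinate to thickened stars of simplices to interpolate between the piecewise definition and a smoothed version; since $\rho'$ is close to $\rho$, the interpolating $G$-elements lie in a small neighborhood of the identity in $G$, and because the immersion property of $D$ is open, the smoothed $D'$ remains a local diffeomorphism. This $D'$ together with its holonomy $\rho'$ descends to the sought $(X,G)$-structure on $M$.

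The main obstacle will be the smooth gluing across the codimension-$1$ faces, since the naive piecewise construction is only continuous: a controlled interpolation is needed so that the immersion property is not destroyed. Handling this requires both a careful choice of cover (with the closed-star embedding property above) and a partition of unity whose supports interact compatibly with the dual complex, so that the final local perturbations of $D$ are uniformly small. Once this smooth gluing is in place, $\rho'$ is the holonomy of $D'$ by construction, which completes the proof; a detailed write-up of essentially this argument appears in Goldman's treatment of geometric structures, where the statement is framed as the openness of the holonomy map.
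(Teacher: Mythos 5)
The paper does not prove this statement: it is quoted as a classical result, with Goldman's survey cited for the proof, and later used via Danciger's Proposition~3.6. So there is no internal proof to compare against; I will assess your argument on its own terms. Your outline is the standard Ehresmann--Thurston argument: encode the $(X,G)$-structure combinatorially as a $G$-valued transition cocycle over a sufficiently fine triangulation (or the nerve of a nice cover), express the transition elements in terms of the holonomy $\rho$, replace $\rho$ by $\rho'$ in this data (the cocycle identity persists because $\rho'$ is still a homomorphism), and reassemble a $\rho'$-equivariant local diffeomorphism $\tilde M \to X$. That is the right skeleton, and it agrees in spirit with Goldman's treatment.

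The place where your write-up does not yet amount to a proof is the ``smoothing by partition of unity'' step, which you correctly flag as the main obstacle but then wave at rather than resolve. A partition of unity does not directly make sense for $X$-valued maps: $X$ is a homogeneous space with no preferred linear structure, so ``interpolating'' between two nearby partial developing maps across a face requires a concrete averaging device --- for instance, writing the mismatch as $g\in G$ close to the identity and interpolating along a path $g_t$ from $1$ to $g$, then applying $g_t$ in a collar of the face; or using a $G$-invariant Riemannian exponential on $X$ when one exists; or doing the averaging at the level of the chart transition elements rather than at the level of the developing map. Only after specifying this does the openness of the immersion condition in the $C^1$-topology (your final appeal) actually apply. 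There are two further points you should make explicit: you need $M$ compact, or at least work on a compact core and only modify the structure away from a collar of the boundary, to get a \emph{uniform} smallness for the $G$-mismatches (this is exactly the form in which the paper invokes Danciger's Proposition~3.6 for $N_\phi$); and you should note that $\rho'$ close to $\rho$ in $\Hom(\pi_1(M),G)$ translates, via the finitely many cocycle words on your finite triangulation, into uniformly small $G$-elements, which is what justifies applying the openness of the immersion property at all.
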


To utilize the Ehresmann--Thurston principle, we will need to realize
all of our structure groups as subgroups of $\PGL(4,\mathbb{R})$.
We first study the process in which $\Sol$ can be seen as a limit of
$\HP = \HP^3$.

Given $s>0$, we let $\mathfrak{r}_1(s)$ be the rescaling map
\begin{equation*}
	\mathfrak{r}_1(s)=\begin{bmatrix}
		\frac{1}{2}(s+s^{-1}) & \frac{1}{2}(s-s^{-1}) & 0 & 0\\
		\frac{1}{2}(s-s^{-1}) & \frac{1}{2}(s+s^{-1}) & 0 & 0\\
		0 & 0 & 0 & -s\\
		0 & 0 & s^{-1} & 0
	\end{bmatrix}.
\end{equation*}
Then $\mathfrak{r}_1(s)$ take $\HP$ to
\begin{equation*}
	\HP_s = \{ [x_1,x_2,x_3,x_4] : -x_1^2+x_2^2 + s^2 x_4^2 < 0\},
\end{equation*}
which we think of as a copy of $\HP$ under a projective change of
coordinates. Conjugating $G_\HP$ by $\mathfrak{r}_1(s)$ gives the
structure group $G_{\HP_s}$ of $\HP_s$. Regular $\HP$ geometry is
given by the case $s=1$. Taking the limit as $s \rightarrow 0$ gives the
subset of $\mathbb{R}P^3$,
\begin{equation*}
	\HP_0 = \{ [x_1,x_2,x_3,x_4] : -x_1^2 + x_2^2 < 0\}.
\end{equation*}
Notice that this is exactly the image of the embedding of $\Sol$ into
$\mathbb{R}P^3$. We will use this fact to obtain a geometric transition
at the level of representations, and apply the Ehresmann--Thurston
Principle to obtain corresponding developing maps.
The map $\mathfrak{r}_1(s)$ can be thought of as the composition of
three maps: the first a hyperbolic translation by $\log s$, which causes
the $x_3$ and $x_4$ coordinates to converge to 0 in the projective
sense; followed by a rescaling to recover those coordinates;
and then a change of coordinates between $x_3$ and $x_4$ to obtain the
correct form for Sol. Hence, this can be thought of as a further
collapse onto a one-dimensional space, followed by a rescaling.
In order to insure
that the developing maps behave correctly, we will use the following lemma.

\begin{lem}[\cite{danciger13}, Lemma 3.7]\label{lem:developing_maps}
	Let $K$ be a compact set and let $F_t:K \rightarrow \mathbb{R}P^3$ be
	any continuous family of functions. Suppose $F_0(K)$ is contained
	in $\mathbb{X}_s$. Then there is an $\epsilon >0$ such that $|t|<\epsilon$
	and $|r-s| < \epsilon$ implies that $F_t(K)$ is contained in $\mathbb{X}_r$.
\end{lem}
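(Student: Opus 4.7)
The plan is to prove the lemma by a standard compactness argument once the defining data of $\mathbb{X}_r$ is assembled into a single jointly continuous function. In each instance of interest (namely $\mathbb{H}^3_r$, $\HP_r$, and their limits), the set $\mathbb{X}_r$ is cut out by a strict inequality $Q_r(\vec{x}) < 0$ for a quadratic form $Q_r$ on $\mathbb{R}^4$ whose coefficients depend polynomially (hence continuously) on $r$. The first step is to turn $Q_r$ into a well-defined continuous function on $\mathbb{R}P^3$: since $Q_r(-\vec{x}) = Q_r(\vec{x})$, restricting to unit representatives $\vec{x} \in S^3 \subset \mathbb{R}^4$ descends to a continuous function $\widetilde{Q}_r : \mathbb{R}P^3 \to \mathbb{R}$, and $\mathbb{X}_r = \{[\vec{x}] : \widetilde{Q}_r([\vec{x}]) < 0\}$.

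Next I would assemble the joint function. Consider
\begin{equation*}
    G : K \times (-1,1) \times [s-1, s+1] \to \mathbb{R}, \qquad G(p, t, r) = \widetilde{Q}_r(F_t(p)).
\end{equation*}
Continuity of $G$ follows from continuity of $(t,p) \mapsto F_t(p)$, continuity of $(r, [\vec{x}]) \mapsto \widetilde{Q}_r([\vec{x}])$, and composition. By hypothesis $G(p, 0, s) < 0$ for every $p \in K$. Since $K$ is compact, the continuous function $p \mapsto G(p, 0, s)$ attains a maximum $-\delta$ with $\delta > 0$.

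Now restrict $G$ to the compact product $K \times [-\epsilon_0, \epsilon_0] \times [s - \epsilon_0, s + \epsilon_0]$ for some small $\epsilon_0$. On this compact set, $G$ is uniformly continuous, so there exists $\epsilon \in (0, \epsilon_0]$ such that for all $p \in K$,
\begin{equation*}
    |t| < \epsilon \text{ and } |r - s| < \epsilon \implies |G(p, t, r) - G(p, 0, s)| < \delta/2.
\end{equation*}
In particular $G(p, t, r) < -\delta/2 < 0$ for all $p \in K$, which means $F_t(p) \in \mathbb{X}_r$ for every $p \in K$, giving the desired containment.

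The conceptual content is entirely carried by compactness of $K$ together with the joint continuity of the defining quadratic form in $r$ and of $F_t$ in $t$; there is no serious obstacle beyond verifying that the defining function descends to $\mathbb{R}P^3$, which is automatic because $Q_r$ is homogeneous of even degree. This is why the argument works uniformly across the different model spaces considered in the paper.
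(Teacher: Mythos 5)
The paper does not actually contain a proof of this lemma; it is stated as a citation of \cite{danciger13}, Lemma 3.7, so there is no in-text argument to compare against. Your proof supplies a correct, self-contained compactness argument of the type that one would write for it: identify each $\mathbb{X}_r$ as the negativity locus of a quadratic form $Q_r$ whose coefficients vary continuously in $r$, observe that homogeneity of even degree makes the sign of $Q_r$ (and hence $\widetilde{Q}_r$) well-defined on $\mathbb{R}P^3$, assemble the jointly continuous function $G(p,t,r)=\widetilde{Q}_r(F_t(p))$, use compactness of $K$ to get a strict separation $G(\cdot,0,s)\le -\delta<0$, and then use uniform continuity on a compact product neighborhood to propagate the strict inequality to nearby $(t,r)$. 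This is essentially the argument in Danciger's original source. One point worth making explicit, since the lemma is applied to pass between $\HP_s$ for $s>0$ and $\HP_0=\Sol$: the form $Q_0=-x_1^2+x_2^2$ is degenerate, but nothing in your argument requires nondegeneracy of $Q_r$ — only that $\{Q_r<0\}$ is open and that $r\mapsto Q_r$ is continuous — so the proof correctly covers the degenerate endpoint $s=0$ without modification.
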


Now, we can prove the following result.

\begin{thm} \label{thm:hyperbolicstructures}
	Let $\phi: S \rightarrow S$ be a pseudo-Anosov homeomorphism whose stable
	and unstable foliations, $\mathcal{F}^s$ and $\mathcal{F}^u$, are orientable
	and $\phi^*$ does not have 1 as an eigenvalue. Then, there exists a family
	of singular hyperbolic structures on $M_\phi$, smooth on the complement of
	$\Sigma$, that degenerate to a transversely hyperbolic foliation. The
	degeneration can
	be rescaled so that the path of rescaled structures limit to the singular
	Sol structure on $M_\phi$, as projective structures.
\end{thm}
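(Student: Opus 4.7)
The plan is to realize the HP-based transition outlined in the introduction. Theorem \ref{thm:smoothness} gives a smooth $(k+3)$-dimensional neighborhood of $\rho_0$ in $R(\pi_1(N_\phi),\PSL(2,\mathbb{C}))$. Since $\rho_0$ takes values in $\PSL(2,\mathbb{R})$ and the cohomology $H^1(\Gamma,\mathfrak{sl}(2,\mathbb{C})_{\Ad{\rho_0}})$ has complex dimension $k \geq 1$, a full half of the tangent directions at $\rho_0$ genuinely leave $\PSL(2,\mathbb{R})$. I would extract an analytic path $\rho_t$ through $\rho_0$ whose initial cocycle $z = \tfrac{d}{dt}\rho_t\,\rho_t^{-1}|_{t=0}$ has non-trivial imaginary component.

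Next I would build the HP structure on $N_\phi$. Because $\rho_0$ is the projection of the singular Sol structure onto an embedded hyperbolic plane along $\mathcal{F}^u$-leaves, it comes equipped with a transversely hyperbolic developing map $D_0 : \tilde{N}_\phi \to \mathbb{H}^2$. Using the imaginary part $u$ of $z$ as a first-order perturbation in the $\kappa_0$ direction, one promotes $D_0$ to a developing map $D_\HP : \tilde{N}_\phi \to \HP^3$ whose holonomy $\rho_\HP$ lies in $\PSL(2,\mathcal{B}_0) \cong G_\HP$ and has the form $A+B\kappa_0$ with $A = \rho_0$ and $B$ determined by $u$, under the identification recalled in Section \ref{sec:HP}. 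By construction $\rho_t$ is compatible to first order with $\rho_\HP$ in the sense of Equation \eqref{eqn:hpmatrix}, so Danciger's Proposition 3.6 produces a family of $(\mathbb{H}^3,\PSL(2,\mathbb{C}))$-structures on $N_\phi$ with holonomies $\rho_t$ for small $t$. Since $\rho_t \to \rho_0$ in $\PSL(2,\mathbb{R})$, these hyperbolic structures collapse onto the hyperbolic plane, recovering the transversely hyperbolic foliation in the limit.

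To recover Sol, I would compose two rescalings. The map $\mathfrak{r}(t)$ of Section \ref{sec:HP} is used to extract the HP structure from the collapsing hyperbolic structures: applied to the developing maps and holonomies of the hyperbolic structures, it produces, in the limit $t\to 0$, the HP developing map $D_\HP$. The second rescaling $\mathfrak{r}_1(s)$ further takes $\HP^3$ onto $\HP_s$, which as $s\to 0$ converges to $\HP_0 = \Sol$ as a projective subset of $\mathbb{R}P^3$. For an appropriate coupling $s=s(t) \to 0$, I would apply $\mathfrak{r}_1(s(t))\mathfrak{r}(t)$ to the hyperbolic developing maps; Lemma \ref{lem:developing_maps} then ensures the resulting family varies continuously in $\mathbb{R}P^3$ and limits to a developing map into $\Sol$. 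The holonomy side $\mathfrak{r}_1(s(t))\mathfrak{r}(t)\rho_t \mathfrak{r}(t)^{-1}\mathfrak{r}_1(s(t))^{-1}$ should explicitly converge, using the form of Sol isometries from Section \ref{sec:sol}, to the holonomy of the singular Sol structure given by the suspension of $\phi$.

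The principal obstacle I anticipate is matching the cocycle $u$ with the stable measure data $\mu_s$ and selecting $s(t)$ so that the rescaled structures converge to the actual Sol developing map rather than to a degenerate or divergent projective structure. By the eigenvector description following Lemma \ref{lem:eigenvalues}, the $\lambda^{-1}$-eigenvector of $\phi^*$ has entries $\mu_s(\gamma_i)$, and Theorem \ref{thm:representations} forces $u$, up to coboundaries and an overall scalar, to carry this $\mu_s$ data, which is precisely what the $ae^c, be^{-c}$ entries of a Sol isometry demand. Making this identification explicit on the generators $\alpha_i, \beta_i, \delta_j, \tau$, and calibrating $s(t)$ against the rate of the hyperbolic collapse, will be the main computational step needed to conclude convergence to the singular Sol structure on $M_\phi$.
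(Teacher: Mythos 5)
Your overall architecture is the same as the paper's — deform a cocycle carrying $\mu_s$ data, package it with $\rho_0$ as an $\HP$ representation, and use half-pipe geometry plus the rescalings $\mathfrak{r}(t)$ and $\mathfrak{r}_1(s)$ to mediate between the collapsing $\mathbb{H}^3$ structures and the Sol structure. But there is a genuine gap in the middle of your argument: you assert that the collapsed developing map $D_0$ can be ``promoted'' to a developing map $D_\HP : \tilde{N}_\phi \to \HP^3$ once you have the representation $\rho_\HP = \rho_0 + z\rho_0\kappa_0$. That promotion is not automatic. Danciger's Proposition 3.6 (the regeneration theorem you then invoke) takes an \emph{existing} HP structure, with a developing map, as its hypothesis — a representation into $G_\HP$ is not enough. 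And since $\rho_\HP$ is not close to $\rho_0$ inside $G_\HP$ in any sense that would allow a direct application of Ehresmann--Thurston at $\rho_0$, you cannot get the HP developing map for free from the transversely hyperbolic foliation. The paper closes precisely this gap by running Ehresmann--Thurston \emph{from Sol}: the conjugates $\mathfrak{r}_1(s)\rho_\HP\mathfrak{r}_1(s)^{-1}$ converge as $s\to 0$ to the Sol holonomy $\rho_{\Sol}$, which is the holonomy of the genuine singular Sol structure on $N_\phi$ and hence has a developing map one can anchor on. Ehresmann--Thurston then produces $(\mathbb{R}P^3,\PGL(4,\mathbb{R}))$-structures with holonomies $\mathfrak{r}_1(s)\rho_\HP\mathfrak{r}_1(s)^{-1}$ for small $s$, and Lemma~\ref{lem:developing_maps} is what guarantees those projective structures actually live in $\HP_s$. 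That is how the $\HP$ developing map is obtained; it is a regeneration from Sol, not a promotion of $D_0$.

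Two smaller points. First, the logical flow in your write-up is circular: you begin by extracting a path $\rho_t$ of $\PSL(2,\mathbb{C})$ representations, read off a cocycle, build $\rho_\HP$, and then invoke Proposition~3.6 to ``produce a family of $\mathbb{H}^3$ structures with holonomies $\rho_t$'' — but $\rho_t$ was your starting input. The correct order is: pick the cocycle $z$ explicitly (with $(2,1)$-entries $b_i = \mu_s(\gamma_i)$, $y_i$ solving Equation~\eqref{eqn:yparameters}, $x_i,y_0$ solving Equation~\eqref{eqn:xparameters}), build $\rho_\HP$, regenerate the HP structure from Sol as above, and only then use smoothness of the representation variety to get a compatible path $\rho_t$ and apply Proposition~3.6. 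Second, the cocycle the paper uses is a \emph{real} cocycle in $Z^1(\pi_1(N_\phi),\mathfrak{sl}(2,\mathbb{R})_{\Ad{\rho_0}})$, placed in the $\kappa_0$-coefficient of $\mathcal{B}_0 = \mathbb{R}+\mathbb{R}\kappa_0$; calling it the ``imaginary part'' of a $\PSL(2,\mathbb{C})$ cocycle is suggestive but conflates the half-pipe direction $\kappa_0$ (with $\kappa_0^2 = 0$) with $i = \kappa_1$, which only coincide after a geometric transition is already in hand. Also note that not every class in $H^1$ carries the $\mu_s$ data — the $(2,1)$-component of a cocycle lies in the one-dimensional $\lambda^{-1}$-eigenspace of $\mathring\phi^*$ and can vanish; the paper deliberately selects the cocycle with nonzero $b_i$, it is not forced on you.
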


\begin{proof}
	From the proof of Theorem \ref{thm:representations}, we can find a
	cocycle
	\begin{equation*}
		z\in Z^1(\pi_1(N_\phi),\mathfrak{sl}(2,\mathbb{R})_{\Ad{\rho_0}}))
	\end{equation*}
	corresponding to $\mathcal{F}^s$. The simple eigenvalue $\lambda^{-1}$
	of $\mathring\phi^*$ has corresponding eigenvector coming from $b_1=\mu_s(\gamma_1),
	\dots, b_{2g+n} = \mu_s(\gamma_{2g+n})$.
	More specifically, $\phi^*$ does not have $1$ as an eigenvalue, so we can solve
	\begin{equation}
		(\mathring\phi^*-I) \begin{pmatrix} y_1\\ \vdots\\y_{2g+n}\end{pmatrix} =
			-D\begin{pmatrix}b_1\\ \vdots\\b_{2g+n}\end{pmatrix},
			\label{eqn:yparameters}
	\end{equation}
	where $D_{2g\times 2g}$ is the restriction of $D$ to the upper left $2g \times
	2g$ entries.

	Finally, since $\lambda$ is a simple eigenvalue of $\mathring\phi^*$, we can
	also solve
	\begin{equation}
		(\mathring\phi^*-\lambda I) \begin{pmatrix}x_1\\ \vdots\\ x_{2g+n}\end{pmatrix}
			-2\lambda \begin{pmatrix}a_1\\ \vdots \\a_{2g+n}\end{pmatrix} y_0 =
			-K \begin{pmatrix}y_1\\ \vdots\\y_{2g+n}\end{pmatrix} -
			C \begin{pmatrix}b_1\\ \vdots\\b_{2g+n}\end{pmatrix}. \label{eqn:xparameters}
	\end{equation}

	Now we will use the above cocycle, which has the form:
	\begin{align*}
		z(\gamma_i) &= \begin{bmatrix} y_i & x_i \\ b_i & -y_i \end{bmatrix} \\
		z(\tau) &= \begin{bmatrix} y_0 & 0 \\ 0 & -y_0 \end{bmatrix}.
	\end{align*}
	The representation $\rho_0$ and the cocycle $z$ are converted into an
	$\HP$ representation, using the description of $G_\HP$ given in Section
	\ref{sec:HP}. In particular, $\rho_0$ and $z$ are combined to form
	a representation of $\pi_1(N_\phi)$ into $\PSL(2,\mathcal{B}_0)$ by
	$\gamma \mapsto \rho_0(\gamma) + z(\gamma)\rho_0(\gamma)\kappa_0$,
	then use the isomorphism from $\PSL(2,\mathcal{B}_0)$ to
	$G_0 = G_\HP$ to obtain:
	\begin{align*}
		\rho_{\HP}(\gamma_i)
		&= \begin{bmatrix}
			1+\frac{a_i^2}{2} & - \frac{a_i^2}{2} & a_i & 0\\
			\frac{a_i^2}{2} & 1-\frac{a_i^2}{2} & a_i& 0\\
			a_i & -a_i  & 1 & 0\\
			-b_i-a_i^2b_i + 2a_iy_i +x_i &
			-b_i+a_i^2b_i-2a_iy_i - x_i & 2y_i-2a_ib_i & 1
		\end{bmatrix} \\
		\rho_{\HP}(\tau) & = \begin{bmatrix}
			\frac{1}{2}(\lambda + \lambda^{-1}) & \frac{1}{2}(\lambda-\lambda^{-1})
				& 0 & 0\\
			\frac{1}{2}(\lambda - \lambda^{-1}) & \frac{1}{2}(\lambda+\lambda^{-1})
				& 0 & 0\\
			0 & 0 & 1 & 0\\
			0 & 0 &  2y_0 & 1 \end{bmatrix},
	\end{align*}
	Conjugating the $\HP$ representation by
	\begin{equation*}
		\mathfrak{r}_1(s)=\begin{bmatrix}
			\frac{1}{2}(s+s^{-1}) & \frac{1}{2}(s-s^{-1}) & 0 & 0\\
			\frac{1}{2}(s-s^{-1}) & \frac{1}{2}(s+s^{-1}) & 0 & 0\\
			0 & 0 & 0 & -s\\
			0 & 0 & s^{-1} & 0
		\end{bmatrix}
	\end{equation*}
	and taking $s \rightarrow 0$ gives the Sol representation
	\begin{align*}
		\rho_\text{Sol}(\gamma_i) &= \begin{bmatrix}
			1 & 0 & 0 & 0\\
			0 & 1 & 0 & 0\\
			b_i & b_i & 1 & 0\\
			a_i & -a_i & 0 & 1 \end{bmatrix}\\
		\rho_\text{Sol}(\tau) & = \begin{bmatrix}
			\frac{1}{2}(\lambda + \lambda^{-1}) & \frac{1}{2}(\lambda-\lambda^{-1}) & 0 & 0\\
			\frac{1}{2}(\lambda - \lambda^{-1}) & \frac{1}{2}(\lambda+\lambda^{-1}) & 0 & 0\\
			0 & 0 & 1 & 0\\
			0 & 0 & 0 & 1 \end{bmatrix}. 
	\end{align*}
	Thus, there is a family of $\HP$ representations that limit to the $\Sol$
	representation in $\text{PGL}(4,\mathbb{R})$, up to rescaling the path of $\HP$
	structures by $\mathfrak{r}_1(s)$.
	
	The structure groups for HP, $\mathbb{H}^3$, and Sol can
	be written as subgroups of $\text{PGL}(4,\mathbb{R})$, giving them
	$(\mathbb{R}P^3,\PGL(4,\mathbb{R}))$-structures.
	Since the Sol representation, as a representation into
	$\PGL(4,\mathbb{R})$, comes from an actual Sol structure on $N_\phi$, then
	by the Ehresmann--Thurston Principle, for small $s$,
	$\mathfrak{r}_1(s) \rho_\HP \mathfrak{r}_1(s)^{-1}$
	are holonomy representations for real projective structures, with developing
	maps $D_s$.
	
	Moreover, the $\Sol$ structure can be thought of as a $(\HP_0,G_{\HP_0})$
	structure, and applying Lemma \ref{lem:developing_maps} with
	$\mathbb{X} = \HP$ to $D_s$ and a compact fundamental domain for
	$N_\phi$, we see that for sufficiently small $s$, the projective structures from
	the Ehresmann--Thurston principle
	correspond to $\HP_s$ structures, which are rescaled $\HP$
	structures.

	Fix such an $s=s_0$, and consider the underlying $\HP$ structure.
	Since $R(\pi_1(N_\phi),\PSL(2,\mathbb{C}))$ is smooth at $\rho_0$,
	by work of Danciger \cite[Proposition 3.6]{danciger13}, there exists a 
	family of hyperbolic structures on $N_\phi$, given by their holonomy
	representations
	$\rho_t:\pi_1(N_\phi) \rightarrow \SO(1,3)$ such that at $t=0$, we
	obtain the $\SO(1,3)$ version of the representation $\rho_0$.
	Furthermore, conjugating $\rho_t$ by $\mathfrak{r}(t)$ yields
	$\rho_\HP$.
	
	For a fixed $s$,
	\begin{equation*}
		\mathfrak{r}_1(s)\mathfrak{r}(t)\rho_t\mathfrak{r}(t)^{-1}\mathfrak{r}_1(s)^{-1}
	\end{equation*}
	limits to $\mathfrak{r}_1(s)\rho_\HP\mathfrak{r}_1(s)^{-1}$. So taking the
	diagonal path
	\begin{equation*}
		\mathfrak{r}_1(t) \mathfrak{r}(t) \rho_t\mathfrak{r}(t)^{-1}\mathfrak{r}_1(t)^{-1}
	\end{equation*}
	yields a rescaling of $\rho_t$ that limits to the $\Sol$ structure.
\end{proof}

Note that the cocycle $z$ has the form:
\begin{equation*}
	z(\gamma_i) = \begin{bmatrix} y_i & x_i \\ b_i & -y_i\end{bmatrix}, 
\end{equation*}
where $b_i = \mu_s(\gamma_i)$. In particular, the deformation of $\rho_0$
contains the information of $\mathcal{F}^s$.
The deformation from the upper triangular representation $\rho_0$, which
is a projection parallel to $\mathcal{F}^u$ onto  a leaf of $\mathcal{F}^s$,
behaves like a deformation in a direction transverse to $\mathcal{F}^s$.

\section{Behavior of the singular locus}\label{sec:singularlocus}

Theorem \ref{thm:hyperbolicstructures} gives a family of hyperbolic structures
on $M_\phi \setminus \Sigma$. In general, the singular locus $\Sigma$ may not
remain as cone singularities. In this section, we will show that it is possible
to control the singularities so that we obtain a family of nearby cone
manifolds.

The manifold $N_\phi = M_\phi \setminus \Sigma$ has torus
boundary components, $\partial N_\phi = \sqcup_{i=1}^k{T_i}$. Let $m_i$ be a
meridian curve for $T_i$, and $l_i$ a longitudinal curve. There is
a model for a torus $T$ degenerating to the HP
structure described by the representation
\begin{align*}
	\rho_{\HP} (m) &= \begin{bmatrix} 1&0&0&0 \\ 0&1&0&0 \\ 0&0&1&0\\ 0&0&\omega&1
		\end{bmatrix},\\
	\rho_{\HP} (l) &= \begin{bmatrix} \cosh d & \sinh d & 0 & 0\\
		\sinh d & \cosh d & 0 & 0\\
		0 & 0 & \pm 1 & 0 \\ 0 & 0 & \mu & \pm 1 \end{bmatrix},
\end{align*}
which is given in \cite{danciger13}.
In particular, take the family of representations into
$\SO(1,3)$ such that
\begin{align*}
	\rho_t(m) &= \begin{bmatrix} 1&0&0&0 \\ 0&1&0&0 \\
		0 & 0 & \cos \omega t & -\sin \omega t\\
		0 & 0 & \sin \omega t & \cos \omega t \end{bmatrix},\\
	\rho_t(l) &= \begin{bmatrix} \cosh d & \sinh d & 0 & 0\\
		\sinh d & \cosh d & 0 & 0 \\
		0 & 0 & \pm \cos \mu t & - \sin \mu t\\
		0 & 0 & \sin \mu t & \pm \cos \mu t\end{bmatrix}. 
\end{align*}
Then, conjugating by
\begin{equation*}
	\begin{bmatrix} 1&0&0&0 \\ 0&1&0&0 \\ 0&0&1&0 \\ 0&0&0&t^{-1}\end{bmatrix}
\end{equation*}
and taking the limit as $t\rightarrow 0$ yields $\rho_{\HP}(m)$ and
$\rho_{\HP}(l)$. Thus, $\omega$,
which is called the \textit{infinitesimal rotation} in \cite{danciger13}, describes the
infinitesimal change in the cone angle about that component of the singularity.

%In the case that $\Sigma$ has multiple components, as in our case, we can
%modify the computation. Notice in that collapsed structure, the lifts of
%the singular orbits into $\mathbb{H}^2$ differ by parabolic isometries of the
%form:
In the case that $\Sigma$ has multiple components, as in our case, we can
modify the computation. From the construction of $\rho_0$, we can see that
each $\rho_0(l_i)$ is a hyperbolic translation with an axis in $\mathbb{H}^2$
having a common endpoint at infinity. Specifically, they all differ from
\begin{equation*}
	\rho_0(\tau) = \begin{bmatrix} \sqrt{\lambda} & 0 \\ 0 & \sqrt{\lambda}^{-1}
	\end{bmatrix}
\end{equation*}
by a parabolic element. Namely, there exists some parabolic of the form
\begin{equation*}
	\begin{bmatrix} 1 & a \\ 0 & 1\end{bmatrix} \in \PSL(2,\mathbb{R}),
\end{equation*}
taking $\rho_0 (\tau)$ to $\rho_0(l_i)$.
If this is deformed by the infinitesimal isometry
\begin{equation*}
	\begin{bmatrix} y & x \\ b & -y \end{bmatrix} \in \mathfrak{sl}(2,\mathbb{R}),
\end{equation*}
the deformation is encapsulated by the $\HP$ matrix,
\begin{equation*}
		\begin{bmatrix}
			1+\frac{a^2}{2} & - \frac{a^2}{2}& a& 0\\
			\frac{a^2}{2} & 1-\frac{a^2}{2} & a& 0\\
			a & -a& 1 & 0\\
			- b -a^2b + 2ay+ x &
			- b +a^2b-2ay- x & 2y-2ab & 1
		\end{bmatrix},
\end{equation*}
which is the $\PGL(4,\mathbb{R})$ form of the $\PSL(2,\mathcal{B}_0)$ element,
\begin{equation*}
	\begin{bmatrix} 1&a\\0&1\end{bmatrix} +
		\begin{bmatrix} y & x\\b&-y \end{bmatrix}\begin{bmatrix} 1&a\\0&1\end{bmatrix} 
		\kappa_0.
\end{equation*}

Then, for a general singularity, the representation $\rho_{\HP}$ should be such
that $\rho_{\HP}(m_i)$ and $\rho_{\HP}(l_i)$ are conjugates of $\rho_{\HP}(m)$
and $\rho_{\HP}(l)$, with the conjugating matrix being of the above type.
This gives the general form:
\begin{align}
	\rho_{\HP}(m_i) &=
		\begin{bmatrix} 1&0&0&0 \\ 0&1&0&0 \\ 0&0&1&0\\
		-a\omega &a\omega &\omega&1 \end{bmatrix},\notag\\
	\rho_{\HP}(l_i) &= \begin{bmatrix}
		\mp a^2 + (1+a^2)C & \pm a^2 - a^2C + S & a(1-e^d) & 0\\
		\mp a^2 + a^2C + S & \pm a^2 + C -a^2C & a(1-e^d) & 0\\
		a(\mp 1 + e^{-d}) & a(\pm 1 - e^{-d}) & \pm 1 & 0\\
		f_1 & f_2 & 2ab(e^d \mp 1) + \mu & \pm 1\\
		\end{bmatrix}
			\label{eqn:translated_cone}
\end{align}
where
\begin{align*}
	C &= \cosh d,\\
	S &= \sinh d,\\
	f_1&=-a \mu - (b+2a^2b-2ay)(e^d \pm 1) + x(e^{-d}\mp 1),\\
	f_2&=a\mu + (2a^2b-2ay-b)(e^d \mp 1) -x( e^{-d} \mp 1).
\end{align*}

The curves $\delta_j=\gamma_{2g+j}$ are meridians of the boundary tori, so we
verify that $\rho_{\HP}(\delta_j)$ agrees with the description of
$\rho_{\HP}(m_i)$. From our computation of $\rho_{\HP}(\gamma_{2g+j})$, we
notice that $a_{2g+j} = b_{2g+j} = 0$ since the signed length of $\delta_j$
around any singular point of the foliation is $0$, so
\begin{equation*}
	\rho_{\HP}(\delta_j) =
		\begin{bmatrix} 1 & 0 & 0 & 0\\ 0 & 1 & 0 & 0\\
			0 & 0 & 1 & 0 \\ x_{2g+j} & -x_{2g+j} & 2y_{2g+j} & 1
		\end{bmatrix}.
\end{equation*}
Hence, the infinitesimal rotation is given by $\omega=2y_{2g+j}$, where
the $y_{2g+j}$ can be chosen
freely as long as they are the same for singular points in the same orbit
of $\phi$. It remains to show that $x_{2g+j} = -a\omega = -2ay_{2g+j}$,
where $a$ is the amount of parabolic translation that takes the axis between
$0$ and infinity to the axis given by the orbit of the singular point $s_j$.

Suppose that $m$ is the order of the orbit of singular points that contains
the singularity encircled by $\delta_j$. Then,
$\phi^m(\delta_j) = v_j \delta_j v_j^{-1}$ for some word $v_j \in \pi_1(S
\setminus \sigma)$.
Noting that $\rho_0(\delta_j) = \begin{bmatrix} 1 & 0 \\ 0 & 1 \end{bmatrix}$
and $\rho_0(v_j) = \begin{bmatrix}1 & A \\ 0 & 1\end{bmatrix}$ for some
real number $A$,
the twisted cocycle condition yields, by using
Equation \eqref{eqn:derivatives} with $\gamma_{2g+j} = \delta_j$, that
\begin{equation}
	\begin{bmatrix} y_{2g+j} & \lambda^m x_{2g+j}\\
		0 & -y_{2g+j} \end{bmatrix}
	= \begin{bmatrix} y_{2g+j} & x_{2g+j} - 2y_{2g+j}A\\
		0 & -y_{2g+j}\end{bmatrix}. \label{eqn:boundary_cocycle}
\end{equation}
This follows because $b_{2g+j}=0$.

In addition to $\tau^m \delta_j \tau^{-m} = \phi^m(\delta_j) = v_j \delta_j v_j^{-1}$,
we have that $\tau^m l_j \tau^{-m} = v_j l_j v_j^{-1}$. As previously noted,
$\rho_0(l_j)$ is conjugate to $\rho_0(\tau)^m$ by the parabolic element
$\begin{bmatrix} 1 & a\\0&1\end{bmatrix}$. This yields
\begin{equation*}
	\rho_0(l_j)=\begin{bmatrix}\sqrt{\lambda}^m & a(-\sqrt{\lambda}^m +
		\sqrt{\lambda}^{-m})\\ 0 & \sqrt{\lambda}^{-m}\end{bmatrix}.
\end{equation*}
From the relation $\tau^m l_j \tau^{-m} = v_j l_j v_j^{-1}$, we obtain that
\begin{equation*}
	\begin{bmatrix}\sqrt{\lambda}^m & a\lambda^m(-\sqrt{\lambda}^m +
		\sqrt{\lambda}^{-m})\\ 0 & \sqrt{\lambda}^{-m}\end{bmatrix}
		= \begin{bmatrix}\sqrt{\lambda}^m & (A+a)(-\sqrt{\lambda}^m +
		\sqrt{\lambda}^{-m})\\ 0 & \sqrt{\lambda}^{-m}\end{bmatrix}.
\end{equation*}
This yields $A=\lambda^m a - a = a(\lambda^m -1)$. The cocycle
condition from Equation \eqref{eqn:boundary_cocycle} yields
$x_{2g+j}(\lambda^m-1) = -2y_{2g+j}a(\lambda^m -1)$, which is
exactly the desired condition $x_{2g+j} = -2ay_{2g+j}$. A similar computation
can be used to find the parameters $x$ and $b$,
with $b$ equaling the $\mu_s$ distance between $\tau$ and $l_i$.
The longitudinal curves $\rho_{\HP}(l_i)$ are conjugates of multiples of
$\rho_{\HP}(\tau)$. Since $\rho_{\HP}(\tau)$ has the form stipulated in Equation
\eqref{eqn:translated_cone} for $\rho_{\HP}(l_i)$, we have first order
compatibility of the $\HP$ representation with representations of
cone singularities.
From the previous computation of $\rho_{\HP}(\tau)$, we can see that
$d = m\log \lambda$ and $\mu=2my_0$.

In order to show that the components of the singular locus remain as cone
singularities, we will additionally need to show that the 
subset of structures where the meridian curves remain elliptic is smooth so
that the first order compatibility can be realized by a path of structures
on $N_\phi$. The proof generalizes
\cite[Lemma 4.25]{danciger13} to multiple components.

\begin{lem}[c.f. \cite{danciger13}, Lemma 4.25]
	\label{lem:conesingularities}
	The subset of $H^1(\pi_1 (N_\phi),\mathfrak{sl}(2,\mathbb{C})_{\Ad{\rho_0}})$
	corresponding to singular hyperbolic structures near $\rho_0$ such that
	$\rho_t(m_i)$ remains elliptic has real dimension $k$.
\end{lem}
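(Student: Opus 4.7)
The plan is to generalize \cite[Lemma 4.25]{danciger13}, which treats a single boundary torus, to the $k$-component setting by exploiting the natural identification of $H^1(\pi_1(N_\phi), \mathfrak{sl}(2,\mathbb{C})_{\Ad{\rho_0}})$ with the space $\mathbb{C}^k$ of $k$-tuples of infinitesimal meridian rotations. Each boundary torus contributes one real linear condition (ellipticity of the corresponding meridian), and the $k$ resulting independent real hyperplanes cut $H^1$ down to a real $k$-dimensional subspace.

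The proof I have in mind proceeds in three steps. First, I would identify $H^1 \cong \mathbb{C}^k$ via the infinitesimal rotations $\omega_i := 2\, y_{2g+j_i}$, where $j_i$ is a chosen representative of the $i$-th $\phi$-orbit of singular points. The proof of Theorem \ref{thm:representations} shows that the free parameters of a cohomology class reduce (after coboundaries and the surface relation) to exactly the $k$ complex numbers $\omega_1,\dots,\omega_k$; the remaining parameters $\vec{x}$, $\vec{z}$, $y_0$, and the $y$-values on the $\alpha$'s and $\beta$'s are uniquely determined via Equations \eqref{eqn:yparameters} and \eqref{eqn:xparameters} together with the compatibility $x_{2g+j} = -2\,a\,y_{2g+j}$ derived in the discussion above. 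Second, for each $i$, the cocycle value $z(m_i) = z(\delta_{j_i})$ is the upper triangular matrix with diagonal entries $\pm y_{2g+j_i}$, so it satisfies $z(m_i)^2 = y_{2g+j_i}^2\, I$; hence $\operatorname{tr}\rho_t(m_i) = 2 + t^2 y_{2g+j_i}^2 + O(t^4)$ for the integrated deformation. This trace stays in the elliptic range $[-2, 2]$ for small $t > 0$ precisely when $y_{2g+j_i}^2 \leq 0$, equivalently $\omega_i \in i\mathbb{R}$. Third, the $k$ real linear conditions $\omega_i \in i\mathbb{R}$ involve distinct complex coordinates of $H^1$, so they are independent and together cut the real dimension from $2k$ down to $k$.

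The main obstacle is passing from the first-order cocycle analysis to the behavior of integrated representations. For this I would invoke Theorem \ref{thm:smoothness}: smoothness of $R(\pi_1(N_\phi), \PSL(2,\mathbb{C}))$ at $\rho_0$ implies, via the implicit function theorem, that cocycle classes integrate to a smooth family $\rho_t$, and the trace map $\rho \mapsto \operatorname{tr}\rho(m_i)$ is smooth with the second-order expansion computed above. Since the elliptic stratum in $\PSL(2,\mathbb{C})$ near the identity is a closed real-analytic submanifold whose tangent behavior is captured by this trace reality condition, higher-order corrections cannot spoil transversality, and the locus of nearby singular hyperbolic structures with elliptic meridians forms a smooth real submanifold of $H^1$ of real dimension $k$, as claimed.
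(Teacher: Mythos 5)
Your approach takes a genuinely different route from the paper's. The paper restricts to the boundary and uses Poincar\'e duality: the image of $H^1(\pi_1(N_\phi),\mathfrak{sl}(2,\mathbb{C})_{\Ad{\rho_0}})$ in each $H^1(\pi_1(T_i),\mathfrak{sl}(2,\mathbb{C})_{\Ad{\rho_0}})$ is half-dimensional (real dimension $2$ inside $4$), the subspace of $H^1(\pi_1(T_i),\cdot)$ keeping $\rho(m_i)$ elliptic has real dimension $3$, and the freedom to choose $y_{2g+j}$ with arbitrarily large modulus provides transversality between the two; each torus then contributes exactly one real condition and the $k$ tori are independent, giving real dimension $k$. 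You instead try to coordinatize $H^1$ globally by the infinitesimal rotations $\omega_i$ and impose $\omega_i\in i\mathbb{R}$. The counts agree, but the duality argument is cleaner because it never needs a global coordinate system on $H^1$ --- only a half-dimensional image and one transversal direction per torus.

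The main gap is your step one. The identification $H^1\cong\mathbb{C}^k$ by the $\omega_i$ is not what the proof of Theorem~\ref{thm:representations} shows. That proof shows that, after eliminating coboundaries, the surviving parameters are the eigenvector scale $c$ on $\vec{z}\in\ker(\mathring\phi^*-\lambda^{-1}I)$ together with the $k$ orbit-constant values $y_{2g+j_i}$, subject to one linear surface relation tying a weighted sum of the $y_{2g+j_i}$ to a multiple of $c\cdot\hat{i}(\vec{e}_\lambda,\vec{e}_{\lambda^{-1}})$. For the $\omega_i$ alone to form coordinates on $H^1$, the pairing $\hat{i}(\vec{e}_\lambda,\vec{e}_{\lambda^{-1}})$ must be nonzero (so that $c$ is recovered from the $\omega_i$); otherwise $z\mapsto(\omega_1,\dots,\omega_k)$ has a one-dimensional kernel and your dimension count in step three breaks. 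That nonvanishing is exactly Lemma~\ref{lem:coneangle}, stated after this lemma in the paper, so you must either invoke it explicitly (logically fine, there is no circularity) or avoid needing it, as the Poincar\'e duality proof does.

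Two smaller points. The trace expansion should read $2 + t^2\,y_{2g+j_i}^2 + O(t^3)$, not $O(t^4)$: the cubic term $t^3\operatorname{tr}(u_1u_2)$ need not vanish. More importantly, near the identity in $\PSL(2,\mathbb{C})$ the elliptic elements form a cone with vertex at the identity, not a closed real-analytic submanifold, so you cannot simply appeal to real-analyticity to push the first-order transversality to all orders. This is why the paper's (and Danciger's) argument happens at the linear level in $H^1(\pi_1(T_i),\cdot)$, where the relevant ``elliptic'' locus \emph{is} a subspace; the integrability back to representations is supplied separately by Theorem~\ref{thm:smoothness} and Propositions 4.3 and 4.10 of \cite{danciger13}, applied afterward.
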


\begin{proof}
	The complex dimension of $H^1(\pi_1(T_i),
	\mathfrak{sl}(2,\mathbb{C})_{\Ad{\rho_0}})$, where
	$T_i$ is a boundary component homeomorphic to a torus, is $2$, given by
	the differentials $dl(l_i)$ and $dl(m_i)$ of the lengths $l(l_i)$ and $l(m_i)$.
	The subspace of $H^1(\pi_1(T_i),\mathfrak{sl}(2,\mathbb{C})_{\Ad{\rho_0}})$ where
	$\rho(m_i)$
	remains elliptic as it is deformed by a cocycle has real dimension equal to 3.
	
	By the Poincar\'{e} duality argument, the image of
	$H^1(\pi_1(N_\phi),\mathfrak{sl}(2,\mathbb{C})_{\Ad{\rho_0}})$ in
	$H^1(\pi_1(T_i),\mathfrak{sl}(2,\mathbb{C})_{\Ad{\rho_0}})$
	has real dimension 2, for each torus component $T_i$.	
	Moreover, from the 
	computation of the space of
	cocycles, we can pick $z \in H^1(\pi_1(N_\phi),
	\mathfrak{sl}(2,\mathbb{C})_{\Ad{\rho_0}})$
	with $y_{2g+i}$ arbitrarily large, so that $z(m_i)$ increase translation length.
	Thus, the image is transverse to the subset
	of $H^1(\pi_1(T_i),\mathfrak{sl}(2,\mathbb{C})_{\Ad{\rho_0}})$ where
	$\rho(m_i)$ remains elliptic.
	Noting that $\partial N_\phi$ is a disjoint union $\sqcup T_i$, the
	subset of the image of $H^1(\pi_1(N_\phi),\mathfrak{sl}(2,\mathbb{C})_{\Ad{\rho_0}})$
	in $H^1(\pi_1(\partial N_\phi),\mathfrak{sl}(2,\mathbb{C})_{\Ad{\rho_0}})$ has real
	dimension $k$.
\end{proof}

Lemma \ref{lem:conesingularities}, along with Theorem
\ref{thm:hyperbolicstructures}, tells us that the we can choose a family of
hyperbolic structures on $N_\phi$ near
the $\Sol$ structure on $N_\phi$ such that the restriction of the corresponding
representations to the boundary tori agree with representations of the
models for cone singularities. After a finite number of applications of
Proposition 4.3 and Proposition 4.10 from \cite{danciger13}, once on each
component of $\Sigma$, we conclude that the representations
can be realized as actual hyperbolic cone structures. We restate those
propositions here.

\begin{prop*}[\cite{danciger13}, Proposition 4.3]
	Let $M$ be a manifold with a projective structure on $N=M \setminus \Sigma$
	with cone-like singularities along $\Sigma=\{\gamma\}$. Let $B$ be a small neighborhood
	of a point $p \in \Sigma$, with $\Sigma_B = \Sigma \cap B$. Then:
	\begin{enumerate}
		\item The developing map $D$ on $\widetilde{B \setminus \Sigma_B}$
			extends to the universal branched cover $\tilde{B} =
			\widetilde{B \setminus \Sigma_B} \cup \Sigma_B$ of $B$ branched
			over $\Sigma_B$.
		\item $D$ maps $\Sigma_B$ diffeomorphically onto an interval of a line
			$\mathfrak{L}$ in $\mathbb{R}P^3$.
		\item The holonomy $\rho(\pi_1(B \setminus \Sigma_B))$ point-wise
			fixes $\mathfrak{L}$.
	\end{enumerate}
\end{prop*}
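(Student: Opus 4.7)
The plan is to work directly from the definition of a cone-like singularity: locally near each point $p \in \Sigma$, the projective structure is modeled on a neighborhood of a line $\mathfrak{L} \subset \mathbb{R}P^3$ that is pointwise fixed by a rotation-type holonomy about the meridian. I would organize the argument in three steps, matching the three conclusions, deferring the smoothness of the extension as the one nontrivial input.

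First, for part (3), observe that $\pi_1(B \setminus \Sigma_B)$ is infinite cyclic, generated by the meridian $\mu$ of the component of $\Sigma$ through $p$. The cone-like hypothesis says $\rho(\mu)$ is conjugate in $\PGL(4,\mathbb{R})$ to a rotation about a projective line, so it fixes a unique line $\mathfrak{L}$ pointwise. Since $\rho(\pi_1(B \setminus \Sigma_B))$ is generated by $\rho(\mu)$, the whole image fixes $\mathfrak{L}$.

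Second, for part (1), I would work in the standard cone model. On $\widetilde{B \setminus \Sigma_B}$ the developing map $D$ is equivariant for the deck group action, generated by $\rho(\mu)$. Because $\rho(\mu)$ fixes $\mathfrak{L}$ pointwise, any sequence in $\tilde B$ approaching the added branch locus $\Sigma_B$ has an image under $D$ that converges to a point of $\mathfrak{L}$, independent of the sheet along which one approaches. This gives a continuous extension to $\tilde B$; smoothness then follows by expressing $D$ in local coordinates adapted to the cone model, where the branched cover unwraps the rotational symmetry so that $D$ becomes a smooth projective chart through $\Sigma_B$. Part (2) follows immediately: the cone-like model forces $D|_{\Sigma_B}$ to be a local diffeomorphism into $\mathfrak{L}$, and for $B$ small enough $\Sigma_B$ is a connected interval, so its image is a diffeomorphic subinterval of $\mathfrak{L}$.

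The main obstacle will be rigorously justifying the smoothness of the extension in the second step. One must show that in the branched-cover coordinates $D$ is not merely continuous but smooth, and in fact a local diffeomorphism, on a neighborhood of $\Sigma_B$; this requires controlling the differential of $D$ transverse to $\Sigma_B$, not just along it, which in turn uses the precise form of the cone-like local model (essentially, a choice of a smooth projective chart that simultaneously straightens $\Sigma$ to $\mathfrak{L}$ and intertwines the meridian holonomy with the rotation). Once this local normal form is in hand, the three conclusions follow as outlined.
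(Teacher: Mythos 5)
The paper does not prove this proposition; it quotes it verbatim from Danciger \cite{danciger13}, introduced with ``We restate those propositions here,'' so there is no proof in the present paper to compare your sketch against. With that caveat, let me flag the substantive issues in what you wrote. Your argument for (3) is circular as stated: you take as the ``cone-like hypothesis'' that $\rho(\mu)$ is conjugate to a rotation fixing a projective line pointwise, but (3) \emph{is} the assertion that the meridian holonomy fixes a line pointwise. In Danciger's treatment the definition of a cone-like singularity is given in terms of a local developing-map model (charts to a standard cone region in $\mathbb{R}P^3$), not in terms of the conjugacy type of the meridian holonomy, and conclusions (1)--(3) are extracted from that local model. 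Starting instead from an assumption about the holonomy type puts the cart before the horse. Second, you yourself identify the real content of (1)--(2) --- that $D$ extends smoothly, not merely continuously, across the branch locus and restricts to a diffeomorphism onto an interval of $\mathfrak{L}$ --- and then defer it. The continuity argument you give (sheet-independence of limits via $\rho(\mu)$-equivariance) is a reasonable first step, but without the explicit cone-model coordinates it does not establish that $D$ is a local diffeomorphism through $\Sigma_B$, which is exactly where the work lies. A correct proof should begin from the precise local model in Danciger's definition and derive all three conclusions from it.
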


\begin{prop*}[\cite{danciger13}, Proposition 4.10]
	Suppose $\rho_t : \pi_1(M) \rightarrow \PGL(4,\mathbb{R})$ is a path of
	representations such that:
	\begin{enumerate}
		\item $\rho_0$ is the holonomy representation of a projective structure
			on $N = M \setminus \Sigma$ with cone-like singularities
			along $\Sigma=\{\gamma\}$, and $\mathfrak{L}$ is the line in
			$\mathbb{R}P^3$ fixed by $\rho_0(\pi_1 (\partial M))$.
		\item $\rho_t(m)$ point-wise fixes a line $\mathfrak{L}_t$ with
			$\mathfrak{L}_t \rightarrow \mathfrak{L}$.
	\end{enumerate}
	Then, for all $t$ sufficient small, $\rho_t$ is the holonomy representation
	for a projective structure on $N$ with cone-like singularities along
	$\Sigma$.
\end{prop*}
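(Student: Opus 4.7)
The plan is to combine the Ehresmann--Thurston principle on the complement $N = M \setminus \Sigma$ with an explicit local cone-like model near $\Sigma$ built from the fact that the line $\mathfrak{L}_t$ varies continuously in $t$. Applying Ehresmann--Thurston to the path $\rho_t$ first gives, for small $t$, a family of projective structures on $N$ with holonomy $\rho_t$ and developing maps $D_t$ that agree at $t = 0$ with the restriction to $N$ of the given cone-like developing map. The task is then to extend each $D_t$ across $\Sigma$ as a cone-like structure.

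For the component $\gamma$ of $\Sigma$, I would take a small tubular neighborhood $B$ with $\Sigma_B = \Sigma \cap B$. Proposition 4.3 records the $t = 0$ picture in the branched cover $\tilde{B}$: the developing map extends across $\Sigma_B$, maps it diffeomorphically onto an interval of $\mathfrak{L}$, and $\rho_0(\pi_1(B \setminus \Sigma_B))$ fixes $\mathfrak{L}$ pointwise. Since $\mathfrak{L}_t \rightarrow \mathfrak{L}$, I would choose a continuous family $g_t \in \PGL(4,\mathbb{R})$ with $g_0 = 1$ sending $\mathfrak{L}$ to $\mathfrak{L}_t$. Conjugating the $t = 0$ local cone-like model by $g_t$ yields a candidate cone-like local structure at time $t$ whose singular image lies on $\mathfrak{L}_t$ and whose holonomy fixes $\mathfrak{L}_t$ pointwise, which is compatible with the hypothesis that $\rho_t(m)$ fixes $\mathfrak{L}_t$.

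The final step is to glue this local cone-like model on $B$ to $D_t$ on the annular overlap $B \setminus \Sigma_B$. Both pieces have the same holonomy restricted to $\pi_1(B \setminus \Sigma_B)$ and agree at $t = 0$, so Ehresmann--Thurston applied to the overlap produces a continuous family of diffeomorphisms identifying the two structures for small $t$, yielding a global projective structure on $M$ with cone-like singularities along $\Sigma$. The main obstacle I anticipate is verifying that the conjugated local model actually realizes the given $\rho_t|_{\pi_1(B \setminus \Sigma_B)}$ exactly and not merely a conjugate of it. After forcing the singular locus to develop onto $\mathfrak{L}_t$, the remaining adjustment must come from the pointwise stabilizer of $\mathfrak{L}_t$ in $\PGL(4,\mathbb{R})$; checking that this stabilizer is large enough to absorb the residual matching, continuously in $t$, is the technical core of the argument.
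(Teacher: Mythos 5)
Note first that the paper does not prove this proposition; it is a verbatim restatement from Danciger \cite{danciger13}, invoked as a tool in the proof of Theorem \ref{thm:conestructures}. So there is no ``paper's own proof'' to compare against; your proposal has to stand on its own against Danciger's argument.

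The skeleton of your argument---Ehresmann--Thurston on the complement to produce projective structures with holonomy $\rho_t$, a local cone-like model near $\Sigma$, and a gluing step---is the right general shape. But the local model step has a genuine gap that you identify and then do not close. Conjugating the $t=0$ cone-like model by an element $g_t$ carrying $\mathfrak{L}$ to $\mathfrak{L}_t$ produces a local structure with boundary holonomy $g_t\,\rho_0\,g_t^{-1}|_{\pi_1(B\setminus\Sigma_B)}$, and for your gluing step to go through you claim this ``has the same holonomy'' as the Ehresmann--Thurston piece, whose boundary holonomy is $\rho_t|_{\pi_1(B\setminus\Sigma_B)}$. That equality fails in general: the deformation $\rho_t$ typically changes conjugacy invariants of the peripheral representation (the infinitesimal rotation of $\rho_t(m)$, the translation and rotation data of $\rho_t(l)$), so $\rho_t|_{\pi_1(T)}$ is simply not $\PGL(4,\mathbb{R})$-conjugate to $\rho_0|_{\pi_1(T)}$ once the boundary deformation is nontrivial. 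Your proposed fix---absorbing the mismatch via the pointwise stabilizer of $\mathfrak{L}_t$---cannot work, because conjugating by that stabilizer moves a peripheral representation around within its own conjugacy class and cannot change cone angle or translation length.

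What the argument actually needs is a direct, non-conjugation construction of a cone-like local developing map that is $\rho_t$-equivariant from the start. The hypothesis that $\rho_t(m)$ pointwise fixes $\mathfrak{L}_t$ is what makes this possible: one constructs a developing map on the universal branched cover of a tubular neighborhood of $\Sigma$ sending $\widetilde{\Sigma}$ onto $\mathfrak{L}_t$ and extending $\rho_t$-equivariantly, and then matches it to the Ehresmann--Thurston developing map by an isotopy on the overlap, where the holonomies now genuinely agree. As written, your proposal names this obstruction but leaves the crucial step unresolved; the gluing cannot be completed by the conjugation argument you sketch.
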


A computation of the commutator $\rho_{\HP}([\alpha_i,\beta_i])$ yields a
matrix of the form in Equation \eqref{eqn:translated_cone}:
\begin{equation*}
	\begin{bmatrix} 1 & 0 & 0 & 0 \\ 0 & 1 & 0 & 0\\ 0 & 0 & 1 & 0\\
		- f &  f & g & 1 \end{bmatrix},
\end{equation*}
where
\begin{align*}
	f&=a_{g+i}^2b_i+2a_{g+i}y_i-a_i^2b_{g+i}-2a_iy_{g+i}\\
	g&= -2a_{g+i}b_i+2a_ib_{g+i}.
\end{align*}
% $f$ and $g$ are expressions in terms of $a_i, a_{g+i}, b_i, b_{g+i}$.

Therefore, the product
of the commutators $\rho_{\HP}(\Pi_{i=1}^g [\alpha_i,\beta_i])$ also has
this form. In the case where $\gamma_{2g+j} = \delta_j$, we also have that
\begin{equation*}
	\rho_{\HP}(\delta_j) = \rho_{\HP}(\gamma_{2g+j}) =
		\begin{bmatrix}1 & 0 & 0& 0\\0 & 1 & 0 & 0\\ 0 & 0 & 1 & 0\\
			x_{2g+j} & -x_{2g+j} & 2y_{2g+j} & 1\end{bmatrix}.
\end{equation*}
Note that $y_{2g+j} = y_{2g+j'}$ if $\delta_j$ and $\delta_{j'}$ belong in the
same cycle of the permutation (i.e. they are meridians for the same component
of $\Sigma$). In other words, we have cone-type singularities that develop in
the singular hyperbolic structure, and for each component of $\Sigma$, there is
freedom in choosing the infinitesimal cone angle about that component.
Moreover, the commutator/singularities relation
\begin{equation*}
	\prod_{i=1}^g [\alpha_i,\beta_i]=\prod_{j=1}^n \delta_j
\end{equation*}
says that the sum of the infinitesimal cone angles about each component,
weighted by the number of singularities in the permutation for that component,
must equal some quantity $\omega_{tot}$ determined by the loop
$\prod_{i=1}^g [\alpha_i,\beta_i]$ that encircles all of the singularities.

\begin{lem}\label{lem:coneangle}
	The total infinitesimal cone angle $\omega_{tot}$ is non-zero.
\end{lem}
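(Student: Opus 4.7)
The plan is to identify $\omega_{tot}$ explicitly with the area of the singular Euclidean structure on $S$ induced by the measured foliations, which is manifestly positive.

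First, I would read off the infinitesimal cone-angle contribution of each commutator directly from the formula displayed just before the lemma. The $(4,3)$-entry of the $\HP$ matrix for $\rho_{\HP}([\alpha_i,\beta_i])$ is $g = -2a_{g+i}b_i+2a_ib_{g+i}$, and comparing with the standard meridian form $\rho_{\HP}(\delta_j)$ where the $(4,3)$-entry is $2y_{2g+j}=\omega$, this gives
\begin{equation*}
  \omega_{tot} \;=\; \sum_{i=1}^{g}\bigl(-2a_{g+i}b_i+2a_ib_{g+i}\bigr)
  \;=\; 2\sum_{i=1}^{g}\bigl(\mu_u(\alpha_i)\mu_s(\beta_i)-\mu_u(\beta_i)\mu_s(\alpha_i)\bigr),
\end{equation*}
since $a_i=\mu_u(\gamma_i)$, $b_i=\mu_s(\gamma_i)$ with $\gamma_i=\alpha_i$ and $\gamma_{g+i}=\beta_i$.

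Next, I would recognize the expression on the right as twice the standard symplectic/intersection pairing of the cohomology classes $\omega_+=[\mu_u]$ and $\omega_-=[\mu_s]$ in $H^1(\bar S;\mathbb{R})$, evaluated on the symplectic basis $\{\alpha_i,\beta_i\}$. Equivalently,
\begin{equation*}
  \omega_{tot} \;=\; 2\,(\omega_+\smile\omega_-)\,[\bar S] \;=\; 2\int_{\bar S}\mu_u\wedge\mu_s .
\end{equation*}
The one-forms $\mu_u$ and $\mu_s$ are the transverse measures of the two orientable foliations $\mathcal{F}^u$ and $\mathcal{F}^s$, which are transverse away from the singular set $\sigma$. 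Their wedge product is the area form of the singular Euclidean structure on $\bar S$ induced by $(\mathcal F^u,\mu_u)$ and $(\mathcal F^s,\mu_s)$, and is pointwise positive on $\bar S\setminus\sigma$.

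Hence $\int_{\bar S}\mu_u\wedge\mu_s$ is the total area of this singular Euclidean structure, which is strictly positive. I therefore conclude $\omega_{tot}\neq 0$. There is no real obstacle here beyond making sure the bookkeeping of signs and orientations in the commutator computation really reproduces the cup product; that is the only step which requires care, and it follows from a direct expansion of $\rho_{\HP}\bigl(\prod_i[\alpha_i,\beta_i]\bigr)$ using the fact that the $\HP$-matrices in question differ from the identity only in the bottom row to first order, so the bottom rows add (with the appropriate signs coming from the $\Ad{\rho_0}$-action of the upper-triangular matrices $\rho_0(\alpha_i),\rho_0(\beta_i)$).
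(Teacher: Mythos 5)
Your proof is correct, and it takes a genuinely different route from the paper. Both proofs begin with the same reduction: the $\omega$-entry of each commutator $\rho_{\HP}([\alpha_i,\beta_i])$ is $2(a_ib_{g+i}-a_{g+i}b_i)$, so $\omega_{tot}$ is a nonzero constant times the cohomological intersection pairing of $\vec e_\lambda=[\mu_u]$ and $\vec e_{\lambda^{-1}}=[\mu_s]$. From there the arguments diverge. The paper stays linear-algebraic: $\phi^*$ preserves the symplectic form $\hat i$, generalized eigenvectors for eigenvalues $\mu\neq\lambda$ therefore pair trivially with $\vec e_{\lambda^{-1}}$, and since generalized eigenvectors span $H^1(\bar S)$ and $\hat i$ is non-degenerate, $\hat i(\vec e_\lambda,\vec e_{\lambda^{-1}})$ must be nonzero. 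You instead read the pairing geometrically: since the invariant foliations are orientable, $\mu_u$ and $\mu_s$ are closed $1$-forms and $\mu_u\wedge\mu_s$ is (up to a sign determined by orientation conventions) the area form of the singular flat metric, so $\int_{\bar S}\mu_u\wedge\mu_s$ equals $\pm$ the area of $\bar S$, which is nonzero. Your version is shorter and makes the \emph{reason} for the nonvanishing transparent (it is literally the flat area), and it immediately pins down the sign; the paper's version is a bit longer but is self-contained at the level of linear algebra and does not require invoking the interpretation of the cup product as flat area. The only point to be careful about, which you flag yourself, is matching the orientation conventions so that $\mu_u\wedge\mu_s$ has a consistent sign; what matters is that its integral is nonzero, and that part is solid.
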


\begin{proof}
	A straight-forward computation shows that the $\omega=\omega_{tot}$ entry in
	the commutator $\rho_{\HP}([\alpha_i,\beta_i])$ is given by
	$2(a_ib_{g+i}-a_{g+i}b_i)$. Hence, the $\omega$ entry in the product
	\begin{equation*}
		\rho_{\HP}(\Pi_{i=1}^g [\alpha_i,\beta_i])
	\end{equation*}
	is the negative of algebraic intersection pairing
	$\hat{i}(\vec{e}_\lambda,\vec{e}_{\lambda^{-1}})$. We note that the
	algebraic intersection is a symplectic form on $H^1(S)$.

	Suppose $e_\mu$ is an
	eigenvector of $\phi^*$ with eigenvalue $\mu \neq \lambda$. Then
	\begin{equation*}
		\hat{i}(\vec{e}_\mu,\vec{e}_{\lambda^{-1}}) =
			\hat{i}(\phi^* \vec{e}_\mu, \phi^* \vec{e}_{\lambda^{-1}})
			 = \mu \lambda^{-1} \hat{i}(\vec{e}_\mu, \vec{e}_{\lambda^{-1}}).
	\end{equation*}
	Since $\mu \neq \lambda$, this means that $\hat{i}(\vec{e}_\mu,
	\vec{e}_{\lambda^{-1}}) = 0$.
	
	If $\vec{e}_{\mu,p}$ is a generalized eigenvector such that $(\phi^* -\mu I)^p
	\vec{e}_{\mu,p} = 0$, then we induct on $p$. Notice that $\phi^*
	\vec{e}_{\mu,p} = \mu \vec{e}_{\mu,p} + c \vec{e}_{\mu,p-1}$, where
	$(\phi^* -\mu I)^{p-1} \vec{e}_{\mu,p-1} = 0$. Hence, if
	$\hat{i}(\vec{e}_{\mu,p-1},\vec{e}_{\lambda^{-1}}) = 0$, then
	it must be that $\hat{i}(\vec{e}_{\mu,p},\vec{e}_{\lambda^{-1}}) = 0$ as well since
	\begin{equation*}
		\hat{i}(\vec{e}_{\mu,p},\vec{e}_{\lambda^{-1}}) =
			\hat{i}(\phi^* \vec{e}_{\mu,p},\phi^* \vec{e}_{\lambda^{-1}})
			= \mu \lambda^{-1} \hat{i}(\vec{e}_{\mu,p},\vec{e}_{\lambda^{-1}}).
	\end{equation*}
	The generalized eigenvectors
	of $\phi^*$ span $\mathbb{R}^{2g}$ and $\lambda$ is a simple eigenvalue,
	so that means that if $\hat{i}(\vec{e}_\lambda,\vec{e}_{\lambda^{-1}}) = 0$,
	then $\hat{i}(\vec{u},\vec{e}_{\lambda^{-1}}) = 0$ for all
	$\vec{u} \in \mathbb{R}^{2g}$, contradicting
	the non-degenerate condition for symplectic forms.
\end{proof}

We can now prove Theorem \ref{thm:conestructures}.

\begin{thm}\label{thm:conestructures}
	Let $\phi: S \rightarrow S$ be a pseudo-Anosov homeomorphism whose stable
	and unstable foliations, $\mathcal{F}^s$ and $\mathcal{F}^u$, are orientable
	and $\phi^*:H^1(\bar{S}) \rightarrow H^1(\bar{S})$ does not have 1 as an
	eigenvalue. Then, there exists a family
	of singular hyperbolic structures on $M_\phi$, smooth on the complement of
	$\Sigma$ and with cone singularities along $\Sigma$, that degenerate to a
	transversely hyperbolic foliation. The degeneration can
	be rescaled so that the path of rescaled structures limit to the singular
	Sol structure on $M_\phi$, as projective structures. Moreover, the cone
	angles can be chosen to be decreasing.
\end{thm}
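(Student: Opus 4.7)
The plan is to assemble the results developed in Sections \ref{sec:singularstructures} and \ref{sec:singularlocus} into the full statement. First I would invoke Theorem \ref{thm:hyperbolicstructures} to produce a family of projective structures on $N_\phi = M_\phi \setminus \Sigma$ which degenerate to the transversely hyperbolic foliation induced by $\rho_0$ and which, after rescaling by the maps $\mathfrak{r}_1(t)\mathfrak{r}(t)$, limit to the singular Sol structure on $M_\phi$ inside $\mathbb{R}P^3$. This immediately settles the qualitative claims about existence of the family, the transversely hyperbolic limit, and the Sol limit after rescaling; what remains is to promote these structures to honest hyperbolic cone structures on $M_\phi$ whose cone angles can be chosen decreasing from multiples of $2\pi$.

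For the cone structure claim, I would select the cocycle $z$ representing the deformation inside the real $k$-dimensional subspace of $H^1(\Gamma, \mathfrak{sl}(2,\mathbb{C})_{\Ad{\rho_0}})$ produced by Lemma \ref{lem:conesingularities}, so that each meridian holonomy $\rho_t(m_i)$ stays elliptic along the deformation. The HP computations at the start of Section \ref{sec:singularlocus} verify that the resulting $\rho_\HP(m_i)$ and $\rho_\HP(l_i)$ match, to first order, the model HP representation of Equation \eqref{eqn:translated_cone} for cone-type singularities. After a finite iteration of Proposition 4.3 and Proposition 4.10 of \cite{danciger13}, applied once to each of the $k$ components of $\Sigma$, the cone-like projective singularities are upgraded to honest hyperbolic cone singularities, giving a family of cone structures on $M_\phi$ extending the prior family on $N_\phi$.

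The main obstacle is the last clause: arranging that every cone angle decreases as $t$ increases from $0$. By the orientability hypothesis on $\mathcal{F}^s, \mathcal{F}^u$ (noted in Section \ref{sec:metabelian}), the starting cone angles at the Sol structure are multiples of $2\pi$, and from the HP computation the infinitesimal cone angle at the $j$-th component of $\Sigma$ is $\omega_j = 2 y_{2g+j}$, with $y_{2g+j}$ constant along each $\phi$-orbit of singularities. The surface relation $\prod_i [\alpha_i,\beta_i] = \prod_j \delta_j$ imposes the single linear constraint
\begin{equation*}
	\sum_j m_j\, \omega_j \;=\; \omega_{tot},
\end{equation*}
where $m_j$ is the length of the $\phi$-orbit attached to the $j$-th component of $\Sigma$. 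Lemma \ref{lem:coneangle} asserts $\omega_{tot} \neq 0$, so the admissible affine subspace of $(\omega_j)$ contains the distinguished point $\omega_j = \omega_{tot}/\sum_i m_i$ at which every coordinate has the same sign as $\omega_{tot}$. Replacing $z$ by $-z$ if necessary (equivalently, flipping the sign of the parameter $t$), I arrange all $\omega_j < 0$, so that for every small $t > 0$ each cone angle strictly decreases from its initial multiple of $2\pi$. Combining this with the previous two steps yields Theorem \ref{thm:conestructures}.
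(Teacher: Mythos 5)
Your proposal matches the paper's own proof in every essential respect: invoke Theorem \ref{thm:hyperbolicstructures} for the collapsing family and Sol limit, use Lemma \ref{lem:conesingularities} to keep meridians elliptic and the HP computations of Section \ref{sec:singularlocus} to verify first-order compatibility with the cone-singularity models, promote to genuine cone singularities via finitely many applications of Danciger's Propositions 4.3 and 4.10 (one per component of $\Sigma$), and finally use Lemma \ref{lem:coneangle} to get $\omega_{tot}\neq 0$ and choose the sign so all infinitesimal cone angles are negative. Your explicit formulation of the linear constraint $\sum_j m_j\omega_j = \omega_{tot}$ together with the component-wise freedom in $y_{2g+j}$ just makes explicit what the paper tacitly uses, and your sign-fix by replacing $z$ with $-z$ is equivalent to the paper's choice of positive orientation for $\{\vec{e}_\lambda,\vec{e}_{\lambda^{-1}}\}$; neither constitutes a different route.
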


\begin{proof}
	Lemma \ref{lem:conesingularities} and Theorem \ref{thm:hyperbolicstructures}
	imply that there exist a family of hyperbolic structures on $N_\phi$ near the
	$\Sol$ structure on $N_\phi$ such that the meridian and longitudinal curves
	of the boundary tori have the form in Equation \eqref{eqn:translated_cone}.
	
	Apply Proposition 4.3 and Proposition 4.10 from \cite{danciger13} on 
	one component $\gamma$ of $\Sigma$, to show that
	$M_\phi \setminus (\Sigma \setminus \gamma)$ has a projective
	structure with holonomy $\rho_t$ with cone-like singularities along
	$\gamma$ for sufficiently small $t$. Proceed inductively on each
	component of $\Sigma$. 
	
	Lemma \ref{lem:coneangle} implies that the infinitesimal cone angles of
	each boundary component can be chosen to be negative, so that the cone
	angles are all decreasing. The total
	infinitesimal cone angle $\omega_{tot} \neq 0$, and the proof of Lemma
	\ref{lem:coneangle} shows that it is the negative of $\hat{i}(\vec{e}_\lambda,
	\vec{e}_{\lambda^{-1}})$, and taking a positive orientation for $\{\vec{e}_\lambda,
	\vec{e}_{\lambda^{-1}} \}$ leads to $\omega_{tot} < 0$.
\end{proof}

As a remark, the results of \cite{danciger13} also imply that there
are nearby $\AdS$ structures that collapse to the same transversely
hyperbolic foliation, such that a similar rescaling gives the $\HP$ structure.
The generalizations made here to those results can also easily be made for
$\AdS$ structures, so there are also nearby $\AdS$ structures with 
tachyon (cone-like) singularities.

\section{Genus 2 Example}

We will compute the representations and parameters to find the deformation
in a genus two example. Begin with the curves $\alpha_1, \alpha_2,
\beta_1,\beta_2$, which form the symplectic basis for $H_1(S)$. We begin with
left Dehn twists $T_{\beta_1}, T_{\beta_2}, T_{\gamma}$ along $\beta_1,
\beta_2$, and $\gamma$, followed by right Dehn twists $T_{\alpha_1}^{-1},
T_{\alpha_2}^{-1}$ along $\alpha_1$ and $\alpha_2$. Since the disjoint sets
of curves $\{\alpha_1,\alpha_2\}$ and $\{\beta_1,\beta_2,\gamma\}$ fill, the
resulting homeomorphism $\phi:S\rightarrow S$ is pseudo-Anosov (see
\cite{penner88} or \cite[p. 398]{farb12}).

\begin{figure}
	\begin{center}
		\includegraphics{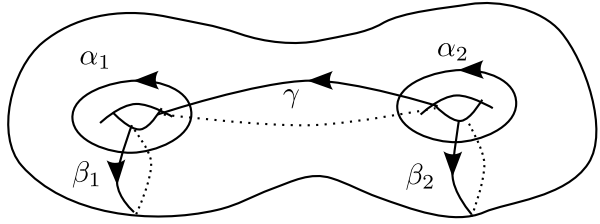}
		\caption{The curves $\alpha_1,\alpha_2,\beta_1,\beta_2$ which form the
			basis for $H_1(S)$, and $\gamma$.}
		\label{fig:genus2}
	\end{center}
\end{figure}

The stable and unstable foliations are orientable with two singular points
of cone angle $4\pi$, one in each of the two components of $S \setminus
\{\alpha_1,\alpha_2,\beta_1,\beta_2,\gamma\}$. A train track for
$\mathcal{F}^u$ is shown in Figure \ref{fig:genus2_train_track}, and we can
verify that the foliations are orientable with two singularities $s_1$ and
$s_2$.

\begin{figure}
	\begin{center}
		\includegraphics{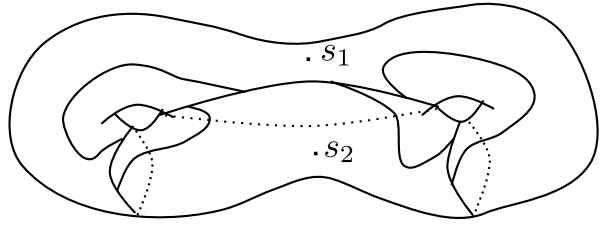}
		\caption{A train track for $\mathcal{F}^u$.}
		\label{fig:genus2_train_track}
	\end{center}
\end{figure}

The induced action on cohomology, with the generators $\alpha_1, \alpha_2,
\beta_1,\beta_2$ and puncture curves $\delta_1,\delta_2$, is
\begin{equation*}
	\mathring\phi^*=\begin{bmatrix} 3 & -1 & -2 & 1 & -1 & 0\\ -1 & 3 & 1 & -2 & 1 & 0\\
		 -1 & 0 & 1 & 0 & 0 &0\\
		0 & -1 & 0 & 1 & -2 & 0\\ 0 & 0 & 0 & 0 & 1 & 0\\
		0 & 0 & 0 & 0 & 0 & 1\end{bmatrix}.
\end{equation*}
The matrix has largest eigenvalue
$\lambda_1 = \frac{5+\sqrt{21}}{2}$. The other eigenvalue $\lambda_2>1$ is
given by $\lambda_2 = \frac{3+\sqrt{5}}{2}$. The eigenvectors of $\phi^*$
for $\lambda_1$ and $\lambda_1^{-1}$ are
\begin{align*}
	\vec{e}_{\lambda_1} &= \begin{pmatrix}
		\frac{3+\sqrt{21}}{2} \\ -\frac{3+\sqrt{21}}{2} \\ -1 \\ 1\\ 0 \\0
		\end{pmatrix}\\
	\vec{e}_{\lambda_1^{-1}} &= \begin{pmatrix}
		-\frac{\sqrt{21}-3}{2} \\ \frac{\sqrt{21}-3}{2} \\ -1 \\ 1\\ 0 \\ 0
		\end{pmatrix}.
\end{align*}
We have a choice for $\vec{e}_{\lambda_1^{-1}}$ as it is only unique up to
scale.
We make the choice that is consistent with the orientation of the embedding
of Sol into $\mathbb{R}^4$. In particular, in the standard embedding, the
$x$-coordinate is contracted and the $y$-coordinate is expanded. Our choice
for $\vec{e}_{\lambda_1}$ and $\vec{e}_{\lambda_1^{-1}}$ has the same
orientation in the singular flat metric on $S$.

Thus, we obtain the parameters
\begin{align*}
	a_1 = -a_2 &= \frac{3+\sqrt{21}}{2}\\
	a_3 = -a_4 &=-1\\
	b_1 = -b_2 &= -\frac{\sqrt{21}-3}{2}\\
	b_3 = -b_4 &= -1.
\end{align*}

Fix a basepoint and choose representatives for $\alpha_1,\alpha_2,\beta_1,
\beta_2$ in $\pi_1(S)$, which we will also call $\alpha_1,\alpha_2,\beta_1,
\beta_2$ (see Figure \ref{fig:genus2_pi1}).
In addition, taking generators $\delta_1$ and $\delta_2$ for loops around the
singularities $s_1$ and $s_2$, we have the following action of $\phi$ on
$\pi_1(S\setminus \sigma)$:

\begin{figure}
	\begin{center}
		\includegraphics{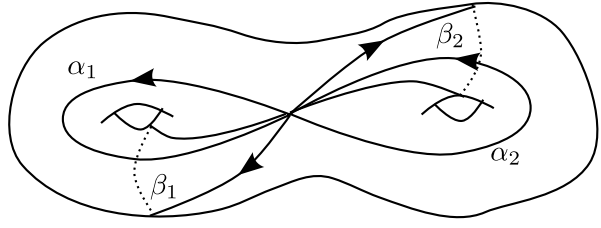}
		\caption{Generators for $\pi_1(S)$.}
		\label{fig:genus2_pi1}
	\end{center}
\end{figure}

\begin{align*}
	%\phi(\alpha_1) &= \alpha_1\beta_1\beta_2\alpha_2\alpha_1^2\beta_1\\
	%\phi(\alpha_1) &= \alpha_1\beta_1\alpha_2^{-1}\beta_2\alpha_2
	%	\alpha_1^2\beta_1\\
	%\phi(\alpha_2) &= \alpha_1\beta_1\beta_2\alpha_2^2\beta_2\alpha_2\\
	%\phi(\alpha_2) &= \alpha_1\beta_1\alpha_2^{-1}\beta_2\alpha_2^2
	%	\beta_2\alpha_2^2\\
	%\phi(\beta_1) &= \alpha_1 \beta_1\\
	%\phi(\beta_2) &= \beta_2 \alpha_2.
	\phi(\alpha_1) &= \alpha_1 \beta_1^{-1}\delta_1^{-1}\alpha_2\beta_2
		\alpha_2^{-2}\alpha_1^2\beta_1^{-1}\\
	\phi(\alpha_2) &= \alpha_2^2\beta_2^{-1}\alpha_2^2\beta_2^{-1}
		\alpha_2^{-1}\delta_1\beta_1 \alpha_1^{-1} \\
	\phi(\beta_1) &= \beta_1\alpha_1^{-1}\\
	\phi(\beta_2) &= \alpha_1\beta_1^{-1}\delta_1^{-1}\alpha_2\beta_2
		\alpha_2^{-2}\beta_2\alpha_2\beta_2^{-1}\alpha_2^{-1}\delta_1^{-1}
		\beta_1\alpha_1^{-1}\\
	\phi(\delta_1) &= \delta_1\\
	\phi(\delta_2) &= \alpha_2 \beta_2 \alpha_2^{-2}\alpha_1 \beta_1^{-1}
		\delta_1^{-1} \delta_2 \delta_1 \beta_1 \alpha_1^{-1} \alpha_2^2
		\beta_2^{-1}\alpha_2^{-1}.
\end{align*}
with $a_5=a_6=b_5=b_6=0$.

Thus, we have that
\begin{align*}
	D &= \left[ \begin{smallmatrix}
		11+2 \sqrt{21} & \frac{-9-3\sqrt{21}}{2} & \frac{-17-3\sqrt{21}}{2} &
			7+2\sqrt{21} & \frac{-13-3\sqrt{21}}{2} & 0\\
		\frac{3+\sqrt{21}}{2} & 15-2\sqrt{21} & \frac{-3-\sqrt{21}}{2} &
			\frac{13+\sqrt{21}}{2} &\frac{-5-\sqrt{21}}{2} & 0\\
		\frac{3+\sqrt{21}}{2} & 0 & \frac{-3-\sqrt{21}}{2} & 0 & 0 & 0\\
		\frac{5+\sqrt{21}}{2} & 1 & \frac{-5-\sqrt{21}}{2} & -1 &
			\frac{5+\sqrt{21}}{2} & 0\\
		0 & 0 & 0 & 0 & 0 & 0\\
		0 & 0 & 0 & 0 & 0 & -5-\sqrt{21} \end{smallmatrix}\right]\\
	C & = \left[ \begin{smallmatrix}
		-62-13\sqrt{21} & \frac{125+5\sqrt{21}}{2} & \frac{101+21\sqrt{21}}{2} &
			-133-28\sqrt{21} & \frac{77+17\sqrt{21}}{2} & 0\\
		\frac{15+3\sqrt{21}}{2} & -103-20\sqrt{21} & \frac{-15-3\sqrt{21}}{2} &
			\frac{59+9\sqrt{21}}{2} & \frac{-23-5\sqrt{21}}{2} & 0\\
		\frac{15+3\sqrt{21}}{2} & 0 & \frac{-15-3\sqrt{21}}{2} & 0 & 0 & 0\\
		\frac{13+3\sqrt{21}}{2} & -4-\sqrt{21} & \frac{-13-3\sqrt{21}}{2} &
			19-4\sqrt{21} & \frac{23+5\sqrt{21}}{2} & 0\\
		0 & 0 & 0 & 0 & 0 & 0\\
		0 & 0 & 0 & 0 & 0 & 23+5\sqrt{21} \end{smallmatrix}\right]
\end{align*}
and $K=-2D$.
From this, we calculate from Equation \eqref{eqn:yparameters} that
\begin{equation*}
	\begin{pmatrix} y_1\\y_2\\y_3\\y_4\end{pmatrix} = -(\phi^*-I)^{-1}
		\left(D_{4\times 4}
	\begin{pmatrix}b_1\\b_2\\b_3\\b_4\end{pmatrix}+
	\begin{pmatrix} -y_5\\y_5 \\ 0 \\-2y_5\end{pmatrix}\right)
	= \begin{pmatrix} \frac{-3+\sqrt{21}}{2} \\ \frac{-3+\sqrt{21}}{2}-2y_5 \\
		-13+5\sqrt{21} - \frac{y_5}{3}\\ \frac{-53+17\sqrt{21}}{2}-\frac{5y_5}{3}\end{pmatrix},
\end{equation*}
and $y_5$ and $y_6$ are free. The span of $\phi^* - \lambda_1 I$ is generated
by the first three columns, so we can take $x_4=0$ (taking $x_4 \neq 0$ would
change the solution by a co-boundary). We then compute the other $x_i$ and
$y_0$ from Equation \eqref{eqn:xparameters}, yielding
\begin{align*}
	x_1 &= \frac{-18312+887\sqrt{21}}{42}
		+ \left(\frac{-3353+1121\sqrt{21}}{42}\right)y_5\\
	x_2 &= \frac{-2835+2573\sqrt{21}}{84}
		+\left(\frac{-812+40\sqrt{21}}{84}\right) y_5\\
	x_3 &= \frac{-2166+615\sqrt{21}}{6}+ \frac{-853+169\sqrt{21}}{6} y_5\\
	x_4 &= 0\\
	x_5 &= 0\\
	x_6 &= \left(\frac{6+2\sqrt{21}}{3}\right) y_6\\
	y_0 &= \frac{7119-1552\sqrt{21}}{84}+
			\left(\frac{1183-267\sqrt{21}}{84}\right)y_5.
\end{align*}

The $\omega$ entry in the commutator $\rho_{\HP}([\alpha_i,\beta_i])$ is computed
to be $2(a_i b_{2+i} - a_{2+i}b_{i})$. Hence, the total infintesimal cone
angle $\omega_{tot}$ is equal to $-4\sqrt{21}$. 
The infinitesimal cone angles about the
two boundary components should add up to $\omega_{tot} = -4\sqrt{21}$, and
the individual infinitesimal cone angles can be chosen so that the cone angles
about both singularities are decreasing towards $2\pi$.
By scaling the $b_i$ by a positive scalar, it is also possible
to change $\omega_{tot}$ to any negative number. 

\section{Discussion}

The hypotheses in Theorem \ref{thm:conestructures} are satisfied by 
pseudo-Anosov maps on the punctured torus, so the result includes
previously known case for the punctured torus. There exist examples
of pseudo-Anosov maps for other hyperbolic surfaces that satisfy the
conditions in the theorem. 

For an arbitrary pseudo-Anosov $\phi$, $\phi^*$ has 1 as an eigenvalue if and
only if the mapping torus $M_\phi$ has first Betti number $>1$. 
If $\phi^*$ does not have 1 as an eigenvalue but the
invariant foliations are not orientable, one can take an orientation cover for
the foliation and lift the pseudo-Anosov to the cover. However, this may
introduce additional eigenvalues for the lifted map. These conditions are needed to
prove Theorem \ref{thm:representations} in order to guarantee that an infinitesimal
deformation can be realized by a smooth path of deformed structures for small
time, but it would be interesting to know if the deformation can be carried out
even when the smoothness condition is not satisfied.

The result in Theorem \ref{thm:conestructures} is local -- we can
find a deformation of the cone angles for small time. It would be of further
interest to know whether the deformation can be carried out all the way to the
complete structure on $M_\phi$. This would give a direct connection between
the hyperbolic structure on fibered manifolds and the combinatorial properties
of the pseudo-Anosov monodromy.

\bibliographystyle{amsplain}
\bibliography{sources1}

\end{document}